\newcommand{\refnote}[2]{\hyperref[#1]{\ref*{#1}#2}}
\newcommand*{\bpar}[1]{\big(#1\big)}
\newcommand*{\Bpar}[1]{\Big(#1\Big)}
\newcommand*{\map}{\mathbf}
\newcommand*{\M}{\mathcal{M}}
\newcommand*{\W}{\mathcal{W}}
\newcommand*{\wt}{\widetilde}
\newcommand*{\m}{\map{M}}
\newcommand*{\QM}{Q_{\map{M}}}
\newcommand*{\QQ}{\proba[Q]\otimes Q}
\newcommand*{\ve}[1][e]{\vec{#1}}
\newcommand*{\ven}[1][]{(\vec{e}_{n#1})_{n\ge 0}}
\title{Basic properties of the infinite critical-FK random map}
\author{Linxiao Chen\thanks{IPhT, CEA-Saclay and Universit\'e Paris-Sud, E-mail: linxiao.chen@math.u-psud.fr}}
\date{}
\begin{document}
\maketitle

\abstract{%
In this paper we investigate the critical Fortuin-Kasteleyn (cFK) random map model.
For each $q\in[0,\infty]$ and integer $n\geq 1$, this model chooses a planar map of $n$ edges with a probability proportional to the partition function of critical $q$-Potts model on that map.
Sheffield introduced the hamburger-cheeseburger bijection which maps the cFK random maps to a family of random words, and remarked that one can construct infinite cFK random maps using this bijection.
We make this idea precise by a detailed proof of the local convergence. 
When $q=1$, this provides an alternative construction of the UIPQ.
In addition, we show that the limit is almost surely one-ended and recurrent for the simple random walk for any $q$, and mutually singular in distribution for different values of $q$.
}

\bigskip\noindent
\textbf{Keywords.} Fortuin-Kasteleyn percolation, random planar maps, hamburger-cheeseburger bijection, local limits, recurrent graph, ergodicity of random graphs.

\medskip\noindent
\textbf{Mathematics Subject Classification (2010).}
60D05, 
60K35, 
05C81, 
60F20. 

\section{Introduction}

\paragraph{Planar maps.}
Random planar maps has been the focus of intensive research in recent
years.  We refer to \cite{ADJ97} for the physics background and
motivations, and to \cite{StFlour14} for a survey of recent
results in the field.

A \emph{finite planar map} is a proper embedding of a finite connected graph into the two-dimensional sphere, viewed up to orientation-preserving homeomorphisms.
\emph{Self-loops} and \emph{multiple edges} are allowed in the graph.
In this paper we will not deal with non-planar maps, and thus we drop the adjective ``planar'' sometimes.
The \emph{faces} of a map are the connected components of the complement of the embedding in the sphere and the \emph{degree} of a face is the number of edges incident to it.
A map is a \emph{triangulation} (resp.~a \emph{quadrangulation}) if all of its faces are of degree three (resp.~four).
The \emph{dual map} $\m^\dual$ of a planar map $\map{M}$ has one vertex associated to each face of $\map{M}$ and there is an edge between two vertices if and only if their corresponding faces in $\map{M}$ are adjacent.

\begin{figure}[t]
\captionsetup{width=0.8\textwidth}
\begin{center}
\includegraphics[scale=1]{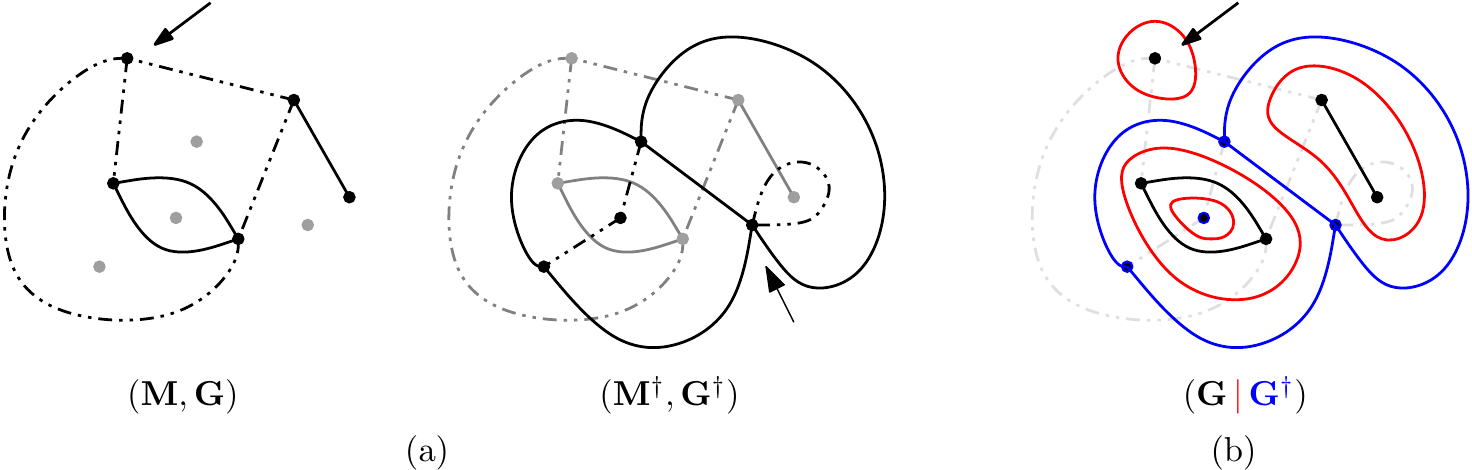}
\caption{%
(a) A subgraph-rooted map and its dual.
Edges of the distinguished subgraph are drawn in solid line, and the other edges in dashed line. The root corner is indicated by an arrow.
(b) Loops separating the distinguished subgraph and its dual subgraph.
}
\label{fig:subgraph-rooted map}
\end{center}
\end{figure}

A \emph{corner} in a planar map is the angular section delimited by two consecutive half-edges around a vertex.
It can be identified with an oriented edge using the orientation of the sphere.
A \emph{rooted} map is a map with a distinguished oriented edge or, equivalently, a corner.
We call \emph{root edge} the distinguished oriented edge, and \emph{root vertex} (resp.\ \emph{root face}) the vertex (resp.\ face) incident to the distinguished corner.
Rooting a map on a corner (instead of the more traditional choice of rooting on an oriented edge) allows a canonical choice of the root for the dual map: the dual root is obtained by exchanging the root face and the root vertex.
A \emph{subgraph} of a planar map is a graph consisting of a subset of its edges and all of its vertices.
Given a subgraph $\map{G}$ of a map $\map{M}$, the \emph{dual subgraph} of $\map{G}$, denoted by $\map{G}^\dual$, is the subgraph of $\m^\dual$ consisting of all the edges that do not intersect $\map{G}$.
Following the terminology in \cite{Ber07}, we call \emph{subgraph-rooted map} a rooted planar map with a distinguished subgraph.
Fig.~\refnote{fig:subgraph-rooted map}{(a)} gives an example of a subgraph-rooted map with its dual map.

\paragraph{Local limit.} For subgraph-rooted maps, the local distance is defined by
\begin{equation}
	d\sub{loc}(\map{(M,G),(M',G')}) = 
	\inf\set{2^{-R}}{R\in\natural,\, B_R(\map{M,G})=B_R(\map{M',G'})}
\end{equation}
where $B_R(\map{M,G})$, the ball of radius $R$ in $(\map{M,G})$, is the subgraph-rooted map consisting of all vertices of $\map{M}$ at graph distance at most $R$ from the root vertex and the edges between them.
An edge of $B_R(\map{M,G})$ belongs to the distinguished subgraph of $B_R(\map{M,G})$ if and only if it is in $\map{G}$.
The space of all finite subgraph-rooted maps is not complete with respect to $d\sub{loc}$ and we denote by $\M$ its Cauchy completion.
We call \emph{infinite subgraph-rooted map} the elements of $\M$ which are not finite subgraph-rooted map.
Note that with this definition all infinite maps are \emph{locally finite}, that is, every vertex is of finite degree.

The study of infinite random maps goes back to the works of Angel, Benjamini and Schramm on the Uniform Infinite Planar Triangulation (UIPT) \cite{BS01,AS03} obtained as the local limit of uniform triangulations of size tending to infinity. Since then variants of this theorem have been proved for different classes of maps \cite{CD06,Kri06,Men10,CMM13,BS14}. A common point of the these infinite random lattices is that they are constructed from the uniform distribution on some finite sets of planar maps.
In this work, we consider a different type of distribution.

\paragraph{cFK random map.}
For $n \geq 1$ we write $\M_n$ for the set of all subgraph-rooted maps with $n$ edges. Recall that in a dual subgraph-rooted maps, the distinguished subgraph $\map{G}$ and its dual subgraph $\map{G}^\dual$ do not intersect.
Therefore we can draw a set of loops tracing the boundary between them, as in Fig.~\refnote{fig:subgraph-rooted map}{(b)}.
Let $\ell(\map{M,G})$ be the \emph{number of loops separating $\map{G}$ and $\map{G}^\dual$}.
For each $q>0$, let $\proba[Q]\ind{q}_n$ be the probability distribution on $\M_n$ defined by
\begin{equation}
	\proba[Q]\ind{q}_n(\map{M,G})\, \propto\, q^{\frac{1}{2}\ell(\map{M,G})}
\end{equation}
By taking appropriate limits, we can define $\proba[Q]\ind{q}_n$ for $q\in\{0,\infty\}$. A \emph{critical Fortuin-Kasteleyn (cFK) random map} of size $n$ and of parameter $q$ is a random variable of law $\proba[Q]\ind{q}_n$ (see Equation~\eqref{eq:marginal} below for the connection with the Fortuin-Kasteleyn random cluster model). From the definition of the loop number $\ell$, it is easily seen that the law $\proba[Q]\ind{q}_n$ is self-dual (which is why we call it critical):
\begin{equation}\label{eq:self dual}
	\proba[Q]\ind{q}_n(\map{M,G}) = \proba[Q]\ind{q}_n(\map{M^\dual,G^\dual})
\end{equation}
Our main result is:
\begin{thm}\label{thm:map limit}
For each $q\in[0,\infty]$,  we have $\proba[Q]\ind{q}_n \cvg{}{n\to\infty} \proba[Q]\ind{q}_\infty$ in distribution with respect to the metric $d\sub{loc}$.
Moreover, if $(\map{M},\map{G})$ has law $\proba[Q]\ind{q}_\infty$, then 
\begin{itemize}
\item $( \mathbf{M}, \mathbf{G}) = ( \mathbf{M}^\dagger,  \mathbf{G}^\dagger)$ in distribution,
\item the map $\map{M}$ is almost surely one-ended and recurrent for the simple random walk,
\item the laws of $\mathbf{M}$ for different values of $q$ are mutually singular.
\end{itemize}
\end{thm}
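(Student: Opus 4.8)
The plan is to transport everything through Sheffield's hamburger--cheeseburger bijection. Under it a subgraph-rooted map with $n$ edges corresponds to a word of length $2n$ in the burger alphabet, the loop number $\ell$ is a simple function of the word, and one checks that $\proba[Q]\ind{q}_n$ pulls back to the law of $2n$ i.i.d.\ symbols --- drawn from an explicit, $q$-dependent law $\nu_q$ --- conditioned on the word being fully reducible, i.e.\ on the associated ``inventory'' walk being an excursion of length $2n$. The first step is a local limit estimate for this conditioning event: its probability decays only polynomially in $n$, and conditioning on it distorts the law of any fixed finite window of the word by a factor $1 + o(1)$; hence the conditioned word converges, as $n \to \infty$, to a bi-infinite i.i.d.\ sequence with marginal $\nu_q$.

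The second step is to show that the reconstruction map $\Phi$ from words to subgraph-rooted maps is \emph{local}: $\Phi$ is deterministic and covariant under the word shift, and the ball $B_R$ of $\Phi(w)$ depends only on the restriction of $w$ to a window $[-L_R, L_R]$, with $L_R$ almost surely finite. This requires tail bounds, under $\nu_q$, on the number of symbols in a window whose ``match'' lies outside it. Combined with the first step, this yields $\proba[Q]\ind{q}_n \cvg{}{n\to\infty} \proba[Q]\ind{q}_\infty$ for $d\sub{loc}$ and identifies $\proba[Q]\ind{q}_\infty$ as the law of $\Phi$ applied to the bi-infinite i.i.d.\ word. The self-duality $(\map{M},\map{G}) = (\map{M}^\dagger,\map{G}^\dagger)$ in distribution then follows because map duality corresponds, on words, to the involution exchanging the two burger types (and the matching orders), which preserves $\nu_q$ --- the word-level form of \eqref{eq:self dual} --- hence preserves the law of $\Phi$ of the bi-infinite word.

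For the qualitative statements I would argue directly on the bi-infinite word. Stationarity of the i.i.d.\ sequence under the shift, together with the shift-covariance of $\Phi$, makes $\proba[Q]\ind{q}_\infty$ invariant under the re-rooting corresponding to the word shift, and (after a routine mass-transport check) unimodular. One-endedness I would obtain by exhibiting, from the two one-ended burger trees encoded in the word, an explicit infinite simple path in $\map{M}$ avoiding any prescribed finite ball, and showing that any two such paths are joined in the complement of the ball, so that $\map{M} \setminus B_R$ has a unique infinite component almost surely. For recurrence, the degree of the root vertex is controlled by the length of a burger excursion near the root, which under $\nu_q$ has an exponential tail uniformly in $n$; the hypotheses of the Gurel--Gurevich--Nachmias recurrence theorem (a random rooted planar graph arising as a limit of finite ones with uniformly exponentially integrable root degree) are therefore met, giving almost sure recurrence of $\map{M}$ for every $q$. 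Finally, the i.i.d.\ sequence being ergodic (indeed mixing) under the shift, the law of large numbers gives almost surely, for any fixed finite pattern, a limiting frequency of its occurrences along the word equal to its $\nu_q$-probability; translating through $\Phi$, the asymptotic density in $\map{M}$ of any fixed local configuration around a vertex is almost surely a constant that is an explicit function of $q$. Taking a countable family of configurations whose densities jointly separate the parameters --- for instance the possible degrees of the faces incident to a vertex --- produces an almost surely defined functional of $\map{M}$ alone that recovers $q$, so the laws of $\map{M}$ for distinct values of $q$ are mutually singular.

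The step I expect to be the main obstacle is the second one: converting the heuristic ``$\proba[Q]\ind{q}_n$ is a conditioned i.i.d.\ word'' into genuine local convergence of \emph{maps}. This rests on the locality of $\Phi$ --- quantitative control of the far-reaching matchings in the word --- which is considerably more delicate than the (also needed, but fairly standard) local limit estimate for the reduction event. The one-endedness, the uniform exponential tail of the root degree feeding the recurrence argument, and the explicit verification that the chosen densities depend non-trivially on $q$ are, by comparison, more routine, if still substantial.
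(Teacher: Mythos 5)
Your overall route coincides with the paper's: transport through the hamburger--cheeseburger bijection, local limit of the conditioned word, locality/continuity of the reconstruction map, self-duality via the burger involution, one-endedness from the word structure, recurrence via Gurel--Gurevich--Nachmias with an exponential tail for the root degree, and singularity via an ergodic frequency argument. But two steps are genuinely gapped as written. The first is the word limit. The claim that conditioning on full reducibility ``distorts the law of any fixed finite window by a factor $1+o(1)$'' cannot be taken at face value: at a fixed position relative to the boundary of the excursion it is simply false (the first letter of a fully reducible word must be a burger), and what rescues the statement is that the root index is uniform among the $2n$ positions. The paper exploits exactly this: the window probability under the conditioned law is rewritten as a conditioned \emph{empirical average} of window patterns over the whole word, and then only Sanov's theorem plus the subexponential decay $\frac1n\log \mathbb{P}\big(\overline{W_{[0,2n)}}=\emptyset\big)\to 0$ (Sheffield's estimate) are needed. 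The polynomial decay you invoke is a much harder input (essentially Gwynne--Mao--Sun), and even granted it does not by itself yield the $1+o(1)$ window control without the re-rooting average; so the step you call a ``fairly standard local limit estimate'' is where the real argument lives, and as sketched it does not go through.

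The second gap is mutual singularity. The theorem concerns the marginal law of $\mathbf{M}$ alone, so the separating functional must be measurable with respect to the rooted map, not the word. Shift-ergodicity of the i.i.d.\ word gives almost sure pattern frequencies \emph{along the word}, but the word ordering is not a function of $\mathbf{M}$ (it encodes $(\mathbf{M},\mathbf{G})$ and the second root), and an ergodic theorem for intrinsic averages (over balls, or over vertices) does not follow from it. The paper resolves this by averaging along the simple random walk on $\mathbf{M}$ and proving ergodicity of the walk-shift on the environment (the appendix), which itself uses recurrence, the excursion decomposition at the root, and the i.i.d.\ word representation; the observable is chosen so that it is computable through the bijection --- the frequency of pending oriented edges equals $(1+p)/16$ with $p=\sqrt q/(2+\sqrt q)$, visibly injective in $q$. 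Your plan specifies neither an averaging scheme intrinsic to $\mathbf{M}$ whose almost sure limits you can prove exist and are constant, nor a verification that your proposed statistics (face degrees at a vertex) are computable and injective in $q$; both are needed. The remaining items are consistent with the paper: self-duality, one-endedness (the paper argues more simply that arches pass over every vertex, so the arch graph, hence $\mathbf{M}$, is one-ended), and the exponential tail of the root degree, though note the latter is not entirely routine --- the paper needs an exchangeability argument to pass from the one-sided count $N_0^+$ to the full root degree $N_0$.
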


So far two classes of methods have been developed to prove local convergence of finite random maps. The first one, initially used in \cite{AS03} is based on precise asymptotic enumeration formulas for certain classes of maps. Although enumeration results about (a generalization of) cFK decorated maps have been obtained using combinatorial techniques \cite{Kos89,EK95,BB11,GJSZ12,BBG12a,BBG12b,BBG12c}, we are not going to follow this approach here.
Instead, we will first transform our finite map model through a bijection into simpler objects. The archetype of such bijection is the famous Cori-Vauqulin-Schaeffer bijection and its generalizations \cite{Sch98,BDG04}.
Then we take local limits of these simpler objects and construct the limit of the maps directly from the latter.
This technique has been used e.g.~in \cite{CD06,CMM13,BS14}.
In this work the role of the Schaeffer bijection will be played by Sheffield's hamburger-cheeseburger bijection \cite{She11} which maps a cFK random map to a \emph{random word} in a measure-preserving way. We will then construct the local limit of cFK random maps by showing that the random word converges locally to a limit, and that the hamburger-cheeseburger bijection has an almost surely continuous extension for that limit.

The cFK random maps have also been the subject of the recent works \cite{BLR15} and \cite{GMS15,GS15a,GS15b}.
These works focused on finer properties of the infinite cFK random map such as exact scaling exponents or scaling limit of the model.
In particular, the scaling exponents associated with the length and the enclosed area of a loop in the infinite cFK random map were derived independently.
The main purpose of the present paper is to prove the local convergence of finite cFK maps to the infinite cFK map.
We offer a detailed proof and construct explicitly the infinite-volume version of the hamburger-cheeseburger bijection.
The one-endedness and recurrence of the infinite cFK random map are obtained as a by-product of this bijection.
The fact that the joint law of $(\map{M,G})$ is mutually singular for different $q$ follows from the various scaling exponents computed in \cite{BLR15} and \cite{GMS15}.
By replacing the law of $(\map{M,G})$ by its marginal in $\map{M}$, we improve slightly the result.
Our proof is based on an ergodicity result of the cFK random maps, which is of independent interest (See Appendix \ref{appendix}).

The rest of this paper is organized as follows.
In Section \ref{sec:cFK} we discuss the law of the cFK random map in more details and examine three interesting special cases. In Section \ref{sec:words} we first define the random word model underlying the hamburger-cheeseburger bijection. Then we show that the model has an explicit local limit, and we prove some properties of the limit. In Section \ref{sec:maps} we construct the hamburger-cheeseburger bijection and prove Theorem \ref{thm:map limit} by translating the properties of the infinite random word in terms of the maps.

\paragraph{Acknowledgements.}
I deeply thank Jérémie Bouttier and Nicolas Curien for their many helps in completing this work.
I am grateful to Ewain Gwynne for pointing out a mistake in the previous version of this paper.
I thank the anonymous referees for many valuable comments, especially for suggesting the mutual singularity property stated in Theorem 1.
I thank also the Isaac Newton Institute and the organizers of the Random Geometry programme for their hospitality during the completion of this work.
This work is partly supported by Grant ANR-12-JS02-0001 (ANR CARTAPLUS) and by Grant ANR-14-CE25-0014 (ANR GRAAL).

\section{More on cFK random map} \label{sec:cFK}
Let $( \mathbf{M}, \mathbf{G})$ be a subgraph-rooted map and denote by $c(\map{G})$ the number of connected components in $\map{G}$.
Recalling the definition of $\ell( \mathbf{M}, \mathbf{G})$ given in the introduction, it is not difficult to see that $\ell( \mathbf{M}, \mathbf{G})=c(\map{G})+c(\map{G}^\dual)-1$.
However $c(\map{G}^\dual)$ is nothing but the number of faces of $\map{G}$, therefore by Euler's relation we have
\begin{equation}\label{eq:loop number}
	\ell(\map{M,G}) = e(\map{G}) + 2c(\map{G}) - v(\map{M}),
\end{equation}
where $e(\map{G})$ is the number of edges $\map{G}$, and $v(\map{M})$ is the number of vertices in $\map{M}$.
This gives the following expression of the first marginal of $\proba[Q]\ind{q}_n$:
for all rooted map $\map{M}$ with $n$ edges, we have
\begin{equation}\label{eq:marginal}
	\proba[Q]\ind{q}_n(\map{M}) \propto q^{-\frac{1}{2} v(\map{M})}
		\sum_{\map{G\subset M}} \sqrt{q}^{e(\map{G})} q^{ c(\map{G})}.
\end{equation}
The sum on the right-hand side over all the subgraphs of $\map{M}$ is
precisely the partition function of the Fortuin-Kasteleyn random
cluster model or, equivalenty, of the Potts model on the map
$\map{M}$
(The two partition functions are equal. See e.g.~\cite[Section 1.4]{Gri06}.
See also \cite[Section 2.1]{BBG12c} for a review of their connection with loop models on planar lattices).
For this reason, the cFK random map is used as a model of quantum gravity theory in which the geometry of the space interacts with the matter (spins in the Potts model).
Note that the ``temperature'' in the Potts model and the prefactor $q^{-\frac{1}{2}v(\map{M})}$ in \eqref{eq:marginal} are tuned to ensure self-duality, which is crucial for our result to hold.

Three values of the parameter $q$ deserve special attention, since the cFK random map has nice combinatorial interpretations in these cases.

\vspace{-1ex}
\paragraph{$\mathbf{q=0}$:}
$\proba[Q]\ind{0}_n$ is the uniform measure on the elements of $\M_n$ which minimize the number of loops $\ell$.
The minimum is $\ell\sub{min}=1$ and it is achieved if and only if the subgraph $\map{G}$ is a spanning tree of $\map{M}$.
Therefore under $\proba[Q]\ind{0}_n$, the map $\map{M}$ is chosen with probability proportional to the number of its spanning trees, and conditionally on $\map{M}$, $\map{G}$ is a uniform spanning tree of $\map{M}$.

At the limit, the marginal law of $\map{G}$ under $\proba[Q]\ind{0}_\infty$ will be that of a critical geometric Galton-Walton tree conditioned to survive.
This will be clear once we defined the hamburger-cheeseburger bijection.
In fact when $q=0$, the hamburger-cheeseburger bijection is reduced to a bijection between tree-rooted maps and excursions of simple random walk on $\integer^2$ introduced earlier by Bernardi \cite{Ber07}.

\vspace{-1ex}
\paragraph{$\mathbf{q=1}$:}
$\proba[Q]\ind{1}_n$ is the uniform measure on $\M_n$.
Since each planar map with $n$ edges has $2^n$ subgraphs, $\map{M}$ is a uniform planar map chosen among the maps with $n$ edges.
Thus in the case $q=1$, Theorem \ref{thm:map limit} can be seen as a construction of the Uniform Infinite Planar Map or of the Uniform Infinite Planar Quadrangulation via Tutte's bijection.
It is a curious fact that with this approach, one has to first decorate a uniform planar map with a random subgraph in order to show the local convergence of the map.
As we will see later, the couple $(\map{M,G})$ is encoded by the hamburger-cheeseburger bijection in an entangled way.

\vspace{-1ex}
\paragraph{$\mathbf{q=\infty}$:}
Similarly to the case $q=0$, the probability $\proba[Q]_n\ind{\infty}$ is the uniform measure on the elements of $\M_n$ which \emph{maximize} $\ell$.
To see what are these elements, remark that each connected component of $\map{G}$ contains at least one vertex, therefore
\begin{equation}
	c(\map{G}) \leq v(\map{M})
\end{equation}
And, at least one edge must be removed from $\map{M}$ to create a new connected component, so
\begin{equation}
	c(\map{G}) \leq c(\map{M}) + e(\map{M}) - e(\map{G}) = n+1 -e(\map{G})
\end{equation}
Summing the two relations, we see that the maximal number of loops is $\ell\sub{max}=n+1$ and it is achieved if and only if each connected component of $\map{G}$ contains exactly one vertex (i.e.\ all edges of $\map{G}$ are self-loops) \emph{and} that the complementary subgraph $\map{M\backslash G}$ is a tree.
Fig.~\refnote{fig:loop tree}{(a)} gives an example of such couple $(\map{M,G})$.

\begin{figure}[ht!]
\captionsetup{width=0.8\textwidth}
\begin{center}
\includegraphics[scale=1]{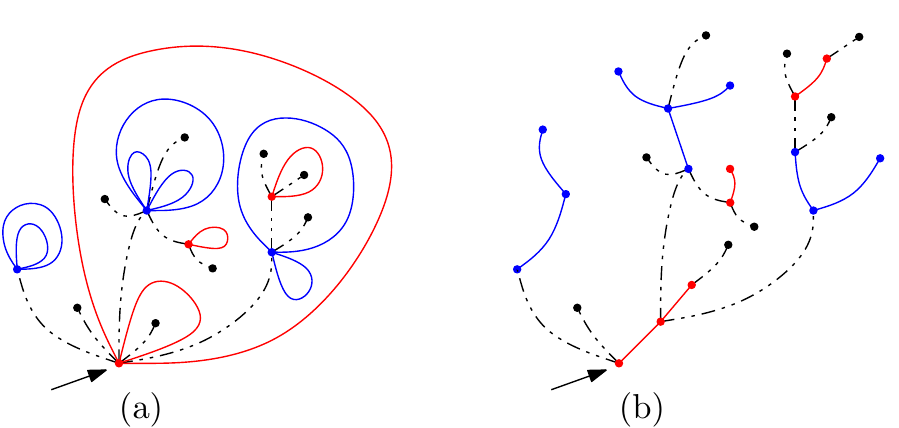}
\caption{(a) A subgraph-rooted map which maximizes the number of loops $\ell$.
Colors are used only to illutrate the bijection.
(b) The percolation configuration on a rooted tree associated to this map by the bijection.
The divided vertices, as well as the replaced edges, are drawn in the same color before and after the bijection.
}
\label{fig:loop tree}
\end{center}
\end{figure}

This model of loop-decorated tree is in bijection with bond percolation of parameter $1/2$ on a uniform random plane tree with $n$ edges, as we now explain.
For a couple $(\map{M,G})$ satisfying the above conditions,
consider a self-loop $\map{e}$ in $\map{G}$.
This self-loop separates the rest of the map $\map{M}$ into two parts which share only the vertex of $\map{e}$.
We divide this vertex in two, and replace the self-loop $\map{e}$ by an edge joining the two child vertices.
The new edge is always considered part of $\map{G}$.
By repeating this operation for all self-loops in the subgraph $\map{G}$ in an arbitrary order, we transform the map $\map{M}$ into a rooted plane tree, see Fig.~\ref{fig:loop tree}.
This gives a bijection from the support of $\proba[Q]\ind{\infty}_n$ to the set of rooted plane tree of $n$ edges with a distinguished subgraph.
The latter object converges locally to a critical geometric Galton-Watson tree conditioned to survive, in which each edge belongs to the distinguished subgraph with probability $1/2$ independently from other edges.
Using the inverse of the bijection above (which is almost surely continuous at the limit), we can explicit the law $\proba[Q]\ind{\infty}_\infty$.
In particular, it is easily seen that $\map{M}$ is almost surely a one-ended tree plus finitely many self-loops at each vertex.
Therefore it is one-ended and recurrent.

\section{Local limit of random words}\label{sec:words}
In this section we define the random word model underlying the hamburger-cheeseburger bijection, and establish its local limit.

We consider words on the alphabet $\Theta=\{\xa,\xb,\xA,\xB,\xF\}$.
Formally, a \emph{word} $w$ is a mapping from an interval $I$ of integers to $\Theta$.
We write $w\in\Theta^I$ and we call $I$ the \emph{domain} of $w$.
Let $\W$ be the space of all words, that is,
\begin{equation}
\W = \bigcup_I \Theta^I
\end{equation}
where $I$ runs over all subintervals of $\integer$.
Note that a word can be finite, semi-infinite or bi-infinite.
We denote by $\emptystr$ the \emph{empty word}.
Given a word $w$ of domain $I$ and $k\in I$, we denote by $w_k$ the letter of index $k$ in $w$.
More generally, if $J$ is an (integer or real) interval, we denote by $w_J$ the \emph{restriction} of the word $w$ to $I\cap J$.
For example, if $w = \xb\xA\xb\xa\xF\xA\xB\xa\in \Theta^{\{0,\ldots,7\}}$, then $w_{[2,6)} = \xb\xa\xF\xA \in\Theta^{\{2,3,4,5\}}$. We endow $  \mathcal{W}$ with the local distance 
\begin{equation}\label{eq:def Dloc}
	D\sub{loc}(w,w') = \inf \set{ 2^{-R} }{ R\in\natural,\, w_{[-R,R)} = w'_{[-R,R)} }
\end{equation}
Note that the equality $w_{[-R,R)} = w'_{[-R,R)}$ implies that $I\cap [-R,R) = I'\cap [-R,R)$, where $I$ (resp. $I'$) is the domain of the word $w$ (resp. $w'$). It is easily seen that $(\W,D\sub{loc})$ is a compact metric space.

\subsection{Reduction of words}
Now we define the reduction operation on the words.
For each word $w$, this operation specifies a pairing between letters in the word called $\emph{matching}$, and returns two shorter words $\rdb{w}$ and $\rdc{w}$.

We follow the exposition given in \cite{She11}.
The letters $\xa,\xb,\xA,\xB,\xF$ are interpreted as, respectively, a hamburger, a cheeseburger, a hamburger order, a cheeseburger order and a \emph{flexible} order.
They obey the following order fulfillment relation: a hamburger order $\xA$ can only be fulfilled by a hamburger $\xa$, a cheeseburger order $\xB$ by a cheeseburger $\xb$, while a flexible order $\xF$ can be fulfilled either by a hamburger $\xa$ or by a cheeseburger $\xb$.
We write $\lambda=\{\xa,\xb\}$ and $\Lambda=\{\xA,\xB,\xF\}$ for the set of lowercase letters (burgers) and uppercase letters (orders).

\paragraph{Finite case.} 
A finite word $w\in\Theta^I$ can be seen from left to right as a sequence of events that happen in a restaurant with time indexed by $I$.
Namely, at each time $k\in I$, either a burger is produced, or an order is placed.
The restaurant puts all its burgers on a stack $S$, and takes note of unfulfilled orders in a list $L$.
Both $S$ and $L$ start as the empty string.
When a burger is produced, it is appended at the end of the stack.
When an order arrives, we check if it can be fulfilled by one of the burgers in the stack.
If so, we take the \emph{last} such burger in the stack and fulfills the order. (That is, the stack is last-in-first-out.)
Otherwise, the order goes to the end of the list $L$.
Fig.~\ref{fig:arch diagram} illustrates this dynamics with an example.

\begin{figure}[h]
\begin{center}
\raisebox{1.3cm}{$
\arraycolsep=2pt
\begin{array}{l| llllll llllll llllll l}
\hline
w_k &&			\xa		&&	\xa		&&	\xB		&&	\xb		&&	\xA		&&	\xF	&&	\xB		&&	\xF		&&	\xa		&
\\\hline
S	&\emptystr	&&\xa		&&\xa\xa	&&\xa\xa	&&\xa\xa\xb	&&\xa\xb	&&\xa	&&\xa		&&\emptystr	&&\xa
\\\hline
L	&\emptystr	&&\emptystr	&&\emptystr	&&\xB		&&\xB		&&\xB		&&\xB	&&\xB\xB	&&\xB\xB	&&\xB\xB	
\\\hline
\end{array}
$}\hspace{1cm}
\includegraphics[scale=1.2]{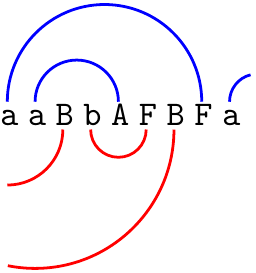}
\caption{The reduction procedure of a word and the associated arch diagram}
\label{fig:arch diagram}
\end{center}
\end{figure}

We encode the \emph{matching} of $w$ by a function $\phi_w:I\to I\cup\{-\infty,\infty\}$.
If the burger produced at time $j$ is consumed by an order placed at time $k$, then the letters $w_j$ and $w_k$ are said to be \emph{matched}, and we set $\phi_w(j)=k$ and $\phi_w(k)=j$.
On the other hand, if a letter $w_k$ corresponds to a unfulfilled order or a leftover burger, then it is \emph{unmatched}, and we set $\phi_w(k)=\infty$ if it is a burger ($w_k\in \lambda$) and $\phi_w(k)=-\infty$ if it is an order ($w_k \in \Lambda$).

Moreover, let us denote by $\rdc{w}$ (resp.\ $\rdb{w}$) the state of the list $L$ (resp. the stack $S$) at the end of the day.
Together they give the reduced form of the word $w$.

\begin{definition}[reduced word]\label{def:reduced word}
The reduced word associated to a finite word $w$ is the concatenation $\rd{w} = \rdc{w}\rdb{w}$.
That is, it is the list of unmatched uppercase letters in $w$, followed by the list of unmatched lowercase letters in $w$.
\end{definition}

The matching and the reduced word can be represented as an arch diagram as follows.
For each letter $w_j$ in the word $w$, draw a vertex in the complex plane at position $j$.
For each pair of matched letters $w_j$ and $w_k$, draw a semi-circular arch that links the corresponding pair of vertices.
This arch is drawn in the upper half plane if it is incident to an $\xa$-vertex, and in the lower half plane if it is incident to a $\xb$-vertex.
For an unmatched letter $w_j$, we draw an open arch from $j$ tending to the left if $\phi_w(j)=-\infty$, or to the right if $\phi_w(j)=\infty$.
See Fig.~\ref{fig:arch diagram}.

It should be clear from the definition of matching operation that the arches in this diagram do not intersect each other.
We shall come back to this diagram in Section \ref{sec:maps} to construct the hamburger-cheeseburger bijection.

\paragraph{Infinite case.}
Remark that a hamburger produced at time $j$ is consumed by a hamburger order at time $k>j$ if and only if 1) all the hamburgers produced during the interval $[j+1,k-1]$ are consumed strictly before time $k$, and 2) all the hamburger or flexible orders placed during $[j+1,k-1]$ are fulfilled by a burger produced strictly after time $j$.
In terms of the reduced word, this means that two letters $w_j=\xa$ and $w_k=\xA$ are matched if and only if $\rd{w_{(j,k)}}$ does not contain any $\xa$, $\xA$ or $\xF$.
This can be generalized to any pair of burger/order.

\begin{prop}[\cite{She11}]\label{prop:local}
For $j<k$, assume that $w_j\in\lambda$ and $w_k\in\Lambda$ can be matched.
Then they are matched in $w$ if and only if $\rd{w_{(j,k)}}$ does not contain any letter that can be matched to either $w_j$ or $w_k$.
\end{prop}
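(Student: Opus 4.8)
The plan is to make rigorous the heuristic stated just before the proposition, which isolates conditions (1)--(2) characterising when an $\xa$ is matched to an $\xA$; the general statement is the same computation run symmetrically over the two burger types, while keeping track of the compatibility relation (an $\xF$ may be matched to either burger, an $\xA$ only to $\xa$, an $\xB$ only to $\xb$). Using the symmetry exchanging $\xa\leftrightarrow\xb$ and $\xA\leftrightarrow\xB$ I would reduce to $w_j=\xa$, so that $w_k\in\{\xA,\xF\}$, and I will say that a letter is \emph{compatible} with $w_j$ (resp.\ with $w_k$) if it can be matched to it. A letter compatible with the burger $w_j$ is necessarily an order, and one compatible with the order $w_k$ is necessarily a burger; hence the condition in the statement is equivalent to asking that $\rd{w_{(j,k)}}=\rdc{w_{(j,k)}}\rdb{w_{(j,k)}}$ contain no order compatible with $w_j$ and no burger compatible with $w_k$.

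First I would establish the locality fact that makes the statement meaningful: whether $w_j$ and $w_k$ are matched depends only on the finite word $w_{[j,k]}$. This follows by peeling one letter at a time off the two ends of $w$ that lie outside $[j,k]$. Removing the \emph{last} letter changes nothing, since the restaurant dynamics at times $1,\dots,n-1$ do not see the $n$-th letter, so every match formed among the first $n-1$ positions is unaffected. Removing the \emph{first} letter is the only delicate point: if it is an order it is placed on the empty stack and joins the list forever, affecting nothing; if it is a burger it sits at the very bottom of the stack, so the stack trajectory above it --- hence every matching among the remaining positions --- is exactly that of the shortened word, the only possible change being that this bottom burger itself gets consumed by some order which would otherwise be unmatched, and this does not touch any match between two later positions. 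Iterating, $w_j$ and $w_k$ are matched in $w$ if and only if they are matched in $w_{[j,k]}$. (This is also precisely what is needed to define matching for infinite words; otherwise one may just take $w$ finite.)

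By the previous step I may assume $w=w_{[j,k]}$, so $w_j$ is the first letter and $w_k$ the last. Running the dynamics, $w_j$ is deposited on the empty stack and then $w_{(j,k)}$ is processed. A straightforward induction on time shows that, as long as $w_j$ has not been consumed, the stack is $w_j$ followed by the current stack produced by reducing $w_{(j,k)}$ on its own; and an order of $w_{(j,k)}$ consumes $w_j$ precisely when it is compatible with $w_j$ and has no compatible burger above it, that is, precisely when it is compatible with $w_j$ and is left unmatched in the reduction of $w_{(j,k)}$ alone --- equivalently, when it is a letter of $\rd{w_{(j,k)}}$ compatible with $w_j$; the first such letter, if any, consumes $w_j$. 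Hence $w_j$ is still on the stack when $w_k$ is read if and only if $\rd{w_{(j,k)}}$ contains no order compatible with $w_j$, and in that case the stack reads $w_j$ followed by $\rdb{w_{(j,k)}}$, so the order $w_k$ --- which is compatible with $w_j$ by hypothesis --- consumes $w_j$ exactly when $\rdb{w_{(j,k)}}$ contains no burger compatible with $w_k$. Combining these two conditions yields the proposition.

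The substantive point, and the one I would write out most carefully, is the front-peeling step of the locality lemma, because matchings of later letters can a priori be disturbed by the fate of earlier ones, and one must argue precisely that the deleted boundary letter is never probed deeply enough to matter. Everything else is routine bookkeeping of the last-in-first-out dynamics, together with the symmetry reduction and the observation that ``compatible'' is a non-symmetric, non-transitive relation --- which is exactly why the final criterion must be phrased as two separate conditions, one on the unmatched orders and one on the unmatched burgers of $\rd{w_{(j,k)}}$.
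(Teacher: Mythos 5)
Your proof is correct and takes essentially the same route as the paper, which states the stack-dynamics characterization only informally (the conditions 1)--2) in the remark preceding the proposition) and otherwise defers to Sheffield. Your write-up is a faithful rigorization of exactly that bookkeeping, with the front-peeling locality lemma and the stack-decomposition induction supplying the details left implicit there; I see no gaps.
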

\noindent
This shows that the matching rule is entirely determined by the reduction operator.
More importantly, we see that the matching rule is \emph{local}, that is, whether $\phi_w(j)=k$ or not only depends on $w_{[j,k]}$.
From this we deduce that the reduction operator is compatible with string concatenation, that is, $\rd{uv} = \rd{u\rd{v}} = \rd{\rd{u}v}$ for any pair of finite words $u$, $v$.

This locality property allows us to define $\phi_w$ for infinite words $w$.
Then, we can also read $\rdb{w}$ (resp. $\rdc{w}$) from $\phi_w$ as the (possibly infinite) sequence of unmatched lowercase (resp. uppercase) letters.
However $\rd{w}$ is not defined in general, since the concatenation $\rdc{w}\rdb{w}$ does not always make sense.

\paragraph{Random word model and local limit.}
For each $p\in[0,1]$, let $\theta\ind{p}$ be the probability measure on $\Theta$ such that
\begin{align*}
	\theta\ind{p}(\xa) &= \theta\ind{p}(\xb) = \frac{1}{4}		&
	\theta\ind{p}(\xA) &= \theta\ind{p}(\xB) = \frac{1-p}{4}	&
	\theta\ind{p}(\xF) &= \frac{p}{2}
\end{align*}
Here $p$ should be interpreted as the proportion of flexible orders among all the orders.
Remark that, regardless of the value of $p$, the distribution is symmetric when exchanging $\xa$ with $\xb$ and $\xA$ with $\xB$.
As we will see in Section \ref{sec:maps}, this corresponds to the self-duality of cFK random maps.

For $n\geq 1$, let $I_k = \{-k,\ldots,2n-1-k\}$, and set
\begin{equation}
\W_n = \bigcup_{0\leq k<2n} \set{w\in \Theta^{I_k} }{\rd{w} = \emptystr}
\end{equation}
For $p\in [0,1]$, let $\proba\ind{p}_n$ be the probability measure on $\W_n$ proportional to the direct product of $\theta\ind{p}$, that is, for all $w\in\W_n$,
\begin{equation}
	\proba\ind{p}_n(w) \propto \prod_{j} \theta\ind{p}(w_j)
\end{equation}\vspace{-0.7ex}\\
where the product is taken over the domain of $w$.
In addition, let $\proba\ind{p}_\infty = \left.\theta\ind{p}\right.^{\otimes\integer}$ be the product measure on bi-infinite words. Our proof of Theorem \ref{thm:map limit} relies mainly on the following proposition, stated by Sheffield in an informal way in \cite{She11}.
\begin{prop} \label{prop:word limit}
For all $p\in[0,1]$, we have $\proba\ind{p}_n \to \proba\ind{p}_\infty$ in law for $D\sub{loc}$ as $n \to \infty$.
\end{prop}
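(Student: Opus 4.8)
The plan is to reduce the statement, by compactness, to the convergence of cylinder probabilities, to rewrite these via a closed formula, and then to isolate the single quantitative estimate that carries the weight of the argument.

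Since $(\W,D\sub{loc})$ is compact, the sequence $(\proba\ind{p}_n)_n$ is tight, so it is enough to identify every subsequential weak limit with $\proba\ind{p}_\infty$. The cylinder sets $\set{w}{w_{[-R,R)}=u}$, for $R\ge 1$ and $u\in\Theta^{\{-R,\dots,R-1\}}$, are clopen and convergence-determining, so the goal becomes
\[
  \proba\ind{p}_n\bpar{w_{[-R,R)}=u}\ \cvg{}{n\to\infty}\ \theta(u):=\textstyle\prod_{j=-R}^{R-1}\theta\ind{p}(u_j).
\]
Because the weight $\prod_j\theta\ind{p}(w_j)$ depends only on the letters of $w$ and not on the position of the origin, $\proba\ind{p}_n$ is the law of a $\theta\ind{p}$-weighted, fully reduced word of length $2n$ whose origin is then placed at a uniformly chosen one of its $2n$ positions. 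Writing $\rho_n:=\proba\ind{p}_\infty(\rd{W_{[0,2n)}}=\emptystr)$ and, for $a+b=2n-2R$, $N_{a,b}(u):=\proba(\rd{XuY}=\emptystr)$ with $X,Y$ independent and $\theta\ind{p}$-distributed of lengths $a,b$, a short computation gives
\[
  \proba\ind{p}_n\bpar{w_{[-R,R)}=u}=\frac{\theta(u)}{2n\,\rho_n}\sum_{a=0}^{2n-2R}N_{a,\,2n-2R-a}(u).
\]
Summing this over $u$ shows $\sum_u\theta(u)\cdot\frac{1}{2n\rho_n}\sum_a N_{a,2n-2R-a}(u)=1-O(1/n)$; since $\Theta^{\{-R,\dots,R-1\}}$ is finite and $\theta$ is a probability on it, it therefore suffices to prove the \emph{one-sided} bound $\limsup_n\frac{1}{2n\rho_n}\sum_a N_{a,2n-2R-a}(u)\le 1$ for each $u$, the matching $\liminf\ge 1$ being then automatic.

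I would obtain this bound from two ingredients: (i) the ratio limit $\rho_{n+1}/\rho_n\to 1$; and (ii) the estimates $N_{a,b}(u)\le C_R\,\rho_n$ for all $a+b=2n-2R$, and $N_{a,b}(u)\le(1+o(1))\,\rho_n$ uniformly over $a,b\ge\delta n$. Granting these, split the sum over $a$ at $\delta n$ and $2n-2R-\delta n$: the central range contributes $(1+o(1))\cdot\tfrac{\#\{\text{central }a\}}{2n}\le 1+o(1)$, while the two end ranges contribute at most $\tfrac{2\delta n\cdot C_R\rho_n}{2n\rho_n}=C_R\delta$; letting $n\to\infty$ and then $\delta\to 0$ yields the claim. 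Ingredient (i) should follow from the super-additivity $\rho_{m+m'}\ge\rho_m\rho_{m'}$ together with the polynomial decay of $\rho_n$ (itself a consequence of the nearest-neighbour random-walk description of the reduced-word length), or can be read off its precise asymptotics. The first half of (ii) is elementary: the matching depends on $u$ only through $\rd{u}=\rdc{u}\rdb{u}$, hence $\{\rd{XuY}=\emptystr\}=\{\rd{X\,\rd{u}\,Y}=\emptystr\}$, and comparing $\theta\ind{p}$-weights of length-$(2n-2m_u)$ words that do or do not contain the fixed block $\rd{u}$ gives $N_{a,b}(u)\le\theta(\rd{u})^{-1}\rho_{n-m_u}\le C_R\rho_n$ by (i), where $m_u$ is the number of matched pairs of $u$.

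The main obstacle is the bulk estimate $N_{a,b}(u)\le(1+o(1))\rho_n$ for $a,b\ge\delta n$. I would decompose $XuY$ at its two interfaces $X\,|\,uY$ and $Xu\,|\,Y$: the event $\{\rd{XuY}=\emptystr\}$ is equivalent to $\rdc{X}=\emptystr$, $\rdb{Y}=\emptystr$ and $\rd{(\rdb{X})\,u\,(\rdc{Y})}=\emptystr$; performing the identical split on a length-$2n$ i.i.d.\ word, with a \emph{random} middle block $V$ of length $2R$ in place of $u$, then exhibits $N_{a,b}(u)/\rho_n$ as the ratio of $\mathbb E\bigl[\mathbf{1}\{\rd{s\,u\,\ell}=\emptystr\}\bigr]$ to $\mathbb E\bigl[\proba_V(\rd{s\,V\,\ell}=\emptystr)\bigr]$, the average being over the boundary burger-stack $s=\rdb{X}$ and the boundary order-list $\ell=\rdc{Y}$, drawn from explicit (sub-)probability measures. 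As $a,b\ge\delta n\to\infty$ these boundary words become long, while replacing $V$ by the fixed $u$ perturbs the interface by only $O(R)$ letters, so what remains is to show that such a bounded perturbation of the interface — together with the accompanying bounded change of the time horizon — alters the probability of a successful \emph{global} reduction by at most a factor $1+o(1)$. This is a local-central-limit / mixing statement for the reduced-word process $m\mapsto\rd{W_{[0,m)}}$ (its probability of returning to a prescribed reduced word being asymptotically insensitive to bounded changes of the initial state and of the time horizon), and establishing it rigorously is the technical heart of the proof: it is here that one must analyse the random walk underlying Sheffield's reduction, and where the behaviour differs qualitatively between $p<1$ and $p=1$.
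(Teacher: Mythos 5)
Your reduction to cylinder probabilities and your exploitation of the uniform placement of the origin match the start of the paper's argument, but from there you take a ``hard estimate'' route whose key step is missing. Everything hinges on your bulk bound $N_{a,b}(u)\le(1+o(1))\,\rho_n$ uniformly over $a,b\ge\delta n$, and you do not prove it: you reformulate it as the statement that an $O(R)$-letter perturbation of the interface (and of the time horizon) changes the probability of complete reduction by a factor $1+o(1)$, and then acknowledge that establishing this is ``the technical heart''. That reformulated statement is a strong ratio/local-limit theorem for the reduced-word process conditioned to terminate at $\emptystr$, uniform over the (random, macroscopically long) boundary stack $\rdb{X}$ and order list $\rdc{Y}$; it is at least as delicate as the proposition itself, it is not available from the inputs you invoke (Sheffield's subexponential estimate on $\rho_n$ gives nothing at this precision, and even the exact polynomial decay rate of $\rho_n$ does not by itself control ratios after a fixed perturbation of the boundary word), and its difficulty genuinely depends on $p$ (the underlying walk has a $p$-dependent diffusivity with a transition at $p=1/2$). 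So as written the proposal is a reduction of the proposition to an unproved, harder-looking estimate, not a proof. A secondary soft spot: your ingredient (i), $\rho_{n+1}/\rho_n\to1$, does not follow from superadditivity plus subexponential decay (Fekete gives only $\tfrac1n\log\rho_n\to0$, not a ratio limit); you would have to import the precise asymptotics of $\rho_n$ from the literature, a much heavier input than needed. (For the only place you actually use (i), namely $\rho_{n-m_u}\le C_R\,\rho_n$, superadditivity alone suffices since $\rho_n\ge\rho_{n-m_u}\rho_{m_u}$, so this part is repairable.)

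The paper avoids your bulk estimate entirely by a softer argument: it writes $\proba\ind{p}_n(B\sub{loc}(w,2^{-R}))$ as the conditional expectation, given $\rd{W_{[0,2n)}}=\emptystr$, of the empirical frequency $\frac{1}{2n}\sum_{k}\idd{W_{[k,k+2R)}\simeq w}$, applies Sanov's theorem to the finite-state Markov chain of $2R$-windows to get an exponential bound $A_\epsilon e^{-C_\epsilon n}$ on the unconditioned deviation probability, and then uses only Sheffield's estimate $\frac1n\log\proba(\rd{W_{[0,2n)}}=\emptystr)\to0$: conditioning on a subexponentially rare event cannot make an exponentially unlikely deviation likely. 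No ratio limit, no uniformity over boundary configurations, and no case distinction in $p$ is needed. If you want to salvage your approach, the cleanest fix is to replace your pointwise comparison of $N_{a,b}(u)$ with $\rho_n$ by exactly this large-deviation-versus-subexponential-conditioning argument; otherwise you must actually prove the uniform interface-insensitivity statement, which would be a separate and substantial piece of work.
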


\subsection{Proof of Proposition \ref{prop:word limit}}

We follow the approach proposed by Sheffield in \cite[Section 4.2]{She11}. Let $W\ind{p}$ be a random word of law $\proba\ind{p}_\infty$, so that $W_{[0,n)}\ind{p}$ is a word of length $n$ with i.i.d.\ letters.
By compactness of $(\W,D\sub{loc})$, it suffices to show that for any ball $B\sub{loc}$ in this space, we have $\proba\ind{p}_n(B\sub{loc}) \!\longrightarrow \!
				\proba\ind{p}_\infty (B\sub{loc})$.
Note that $D\sub{loc}$ is an ultrametric and 
the ball $B\sub{loc}(w,2^{-R})$ of radius $2^{-R}$ around $w$ is the set of words which are identical to $w$ when restricted to $[-R,R)$.	
In the rest of the proof, we fix an integer $R\geq 1$ and a word $w\in\Theta^{[-R,R)\cap \integer}$.
Recall that $W^{(p)}$ has law $ \mathbb{P}_{\infty}^{(p)}$.
In the following we omit the parameter $p$ from the superscripts to keep simple notations.

Recall that the space $\W_n$ is made up of $2n$ copies of the set $\set{w\in\Theta^{2n}}{\rd{w}=\emptystr}$ differing from each other by translation of the indices.
Therefore $\proba_n$ can be seen as the conditional law of $W_{[-K,2n-K)}$ on the event  $\{\rd{W_{[-K,2n-K)}}=\emptystr \}$, where $K$ is a uniform random variable on $\{0,\ldots,2n-1\}$ independent from $W$.
Moreover, for the word $W_{[-K,2n-K)}$ to have $w$ as its restriction to $[-R,R)$, one must to have $R\leq K\leq 2n-R$.
Hence,
\begin{align*}
		\proba_n\Par{ B\sub{loc}(w,2^{-R}) } &=	\proba\Pcond{	R\leq K\leq 2n-R\text{ and }W_{[-R,R)} = w
					}{	\rd{W_{[-K,2n-K)}} = \emptystr 	}
\\&=	\frac{1}{2n} \sum_{k=R}^{2n-R}
		\proba\Pcond{	W_{[-R,R)} = w
					}{	\rd{W_{[-k,2n-k)}} = \emptystr 	}
\\&=	\frac{1}{2n} \sum_{k=0}^{2n-2R}
		\proba\Pcond{	W_{[k,k+2R)}\simeq w }{ \rd{W_{[0,2n)}}=\emptystr }
\\&=	\expect\Econd{	\frac{1}{2n} \sum_{k=0}^{2n-2R} \idd{ W_{[k,k+2R)}\simeq w }
					}{	\rd{W_{[0,2n)}}=\emptystr }
\end{align*}
where in the last two steps, we denote by $u\simeq v$ the fact that two words are equal
up to an overall translation of indices.
On the other hand, set
\begin{equation}
	\pi_w = \proba(B\sub{loc}(w,2^{-R})) = \prod_{k=-R}^{R-1} \theta(w_k)
\end{equation}
By translation invariance of $W$ we have
\begin{equation}
\expect\Sqpar{ \frac{1}{2n} \sum_{k=0}^{2n-2R} \idd{W_{[k,k+2R)}\simeq w} }
= \frac{2n-2R+1}{2n} \pi_w
\end{equation}
In fact, up to boundary terms of the order $O(R/n)$, the quantity inside the expectation
is the empirical measure of the Markov chain $(W_{[k,k+2R)})_{k\geq 0}$ taken at the state $w$.
This is an irreducible Markov chain on the finite state space $\Theta^{2R}$.
Sanov's theorem (see e.g.~\cite[Theorem 3.1.2]{Dem98}) gives the following large deviation estimate.
For any $\epsilon>0$, there are constants $A_\epsilon, C_\epsilon>0$ depending only on $\epsilon$
and on the transition matrix of $(W_{[k,k+2R)})_{k\geq 0}$, such that
\begin{equation}
	\proba\Par{ \Big|\frac{1}{2n} \sum_{k=0}^{2n-2R} \idd{W_{[k,k+2R)} \simeq w} - \pi_w \Big|
				>\epsilon }
\leq A_\epsilon e^{-C_\epsilon n}
\end{equation}
for all $n\geq 1$.
Since $\abs{\frac{1}{2n} \sum\limits_{k=0}^{2n-2R} \idd{W_{[k,k+2R)} \simeq w} - \pi_w}$
is bounded by 1, we have
\begin{align*}
	 	\abs{ \proba_n\Par{ B\sub{loc}(w,2^{-R}) } -\pi_w }
&\leq	\expect\Econd{ 
		\Big| \frac{1}{2n} \sum_{k=0}^{2n-2R} \idd{W_{[k,k+2R)} \simeq w} - \pi_w \Big|
		}{ \rd{W_{[0,2n)}}=\emptystr }
\\&\leq \epsilon + \proba\Pcond{ 
		\Big| \frac{1}{2n} \sum_{k=0}^{2n-2R} \idd{W_{[k,k+2R)} \simeq w} - \pi_w \Big|
		> \epsilon
		}{ \rd{W_{[0,2n)}}=\emptystr }
\\&\leq	\epsilon + \frac{1}{ \proba\Par{ \rd{W_{[0,2n)}}=\emptystr }} \cdot
		\proba\Par{
		\Big| \frac{1}{2n} \sum_{k=0}^{2n-2R} \idd{W_{[k,k+2R)} \simeq w} - \pi_w \Big|
		> \epsilon  }
\\&\leq \epsilon + \frac{A_\epsilon e^{-C_\epsilon n}}{ \proba\Par{ \rd{W_{[0,2n)}}=\emptystr }}
\end{align*}
According to \cite[Eq.\ (28)]{She11}, \footnote{It has been shown in \cite{GMS15} that $\proba\Par{ \rd{W_{[0,2n)}}=\emptystr }$ decays as a power of $n$, with the exact exponent as a function of $p$. But we do not need this fact here.}
$$
	\lim_{n\to\infty} \frac{1}{n} \log \proba\Par{ \rd{W_{[0,2n)}}=\emptystr } = 0
$$
Therefore the second term converges to zero as $n\to\infty$.
Since $\epsilon$ can be taken arbitrarily close to zero, this shows that $\proba_n\Par{ B\sub{loc}(w,2^{-R}) } \to \pi_w$ as $n\to\infty$.

\subsection{Some properties of the limiting random word}
\label{sub:limit properties}

In this section we show two properties of the infinite random word $W\ind{p}$ which will be the word-counterpart of Theorem \ref{thm:map limit}.
Both properties are true for general $p\in[0,1]$.
However we will only write proofs for $p<1$, since the case $p=1$ corresponds to cFK random maps with parameter $q=\infty$, for which the local limit is explicit.
(The proofs for $p=1$ are actually easier, but they require different arguments.)

\begin{prop}[Sheffield \cite{She11}]\label{prop:pre-continuity}
For all $p\in[0,1]$, almost surely,
\begin{enumerate}
\item	$\rd{W\ind{p}}=\emptystr$, that is, every letter in $W\ind{p}$ is matched.
\item	For all $k\in\integer$, $\rdb{W\ind{p}_{(-\infty,k)}}$ contains infinitely many $\xa$ and infinitely many $\xb$.
\end{enumerate}
\end{prop}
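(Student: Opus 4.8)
The plan is to prove the two statements using the local convergence already established in Proposition~\ref{prop:word limit} together with an ergodic/recurrence argument on the bi-infinite i.i.d.\ word $W\ind{p}$. First I would set up the natural random walk encoding of the word. Assign to each letter a height increment: a burger ($\xa$ or $\xb$) contributes $+1$ and an order ($\xA$, $\xB$ or $\xF$) contributes $-1$, and let $S_k=\sum_{0< j\le k}\varepsilon_j$ (with the symmetric convention for $k\le 0$) be the associated walk, which under $\proba\ind{p}_\infty$ is a centered simple random walk on $\integer$ since $\theta\ind{p}(\lambda)=\theta\ind{p}(\Lambda)=\tfrac12$ for every $p$. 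The key observation is that $|\rd{w}|$ equals the number of unmatched letters, and by the last-in-first-out matching rule one always has $\big||\rdb{w_{[i,j]}}|-|\rdc{w_{[i,j]}}|\big|=|S_j-S_{i-1}|$, while the total number of unmatched letters in $w_{[i,j]}$ is at least $|S_j-S_{i-1}|$. More importantly, recurrence of $S$ controls matchings: a letter at position $j$ is unmatched in $W\ind{p}$ in the semi-infinite word $W\ind{p}_{(-\infty,\,\cdot\,)}$ precisely when some monotonicity of the partial sums is never undone, and by recurrence of the simple random walk that happens with probability zero for each fixed $j$.

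For part (1), $\rd{W\ind{p}}=\emptystr$, I would argue as follows. A lowercase letter $w_j\in\lambda$ is unmatched in $W\ind{p}$ iff no order to its right ever consumes it; since the stack is LIFO, this forces the height of the burger stack restricted to $[j,k]$ to stay $\ge 1$ for all $k>j$, which in particular forces $S_k\ge S_{j-1}+1$ for all $k\ge j$ (the stack content is bounded below by the running minimum of the walk increments from $j$). But the simple random walk $(S_k)_{k\ge j}$ is recurrent, so almost surely $\liminf_{k\to\infty}S_k=-\infty$; hence a.s.\ no fixed $w_j$ stays unmatched, and by countable union over $j\in\integer$, a.s.\ every lowercase letter is matched. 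Symmetrically (running time backwards, using that $\theta\ind{p}$ is symmetric under $\xa\leftrightarrow\xb$, $\xA\leftrightarrow\xB$ and the walk is reversible), every uppercase letter is matched. For part (2), I would use the same recurrence input the other direction: for fixed $k$, the reduced stack $\rdb{W\ind{p}_{(-\infty,k)}}$ records the burgers produced before time $k$ that are not consumed before time $k$; its length is at least $S_{k-1}-\min_{j\le k-1}S_j$ up to a harmless correction, and since $(S_j)_{j\le k-1}$ oscillates (by recurrence, $\limsup_{j\to-\infty}S_j=+\infty$ and $\liminf=-\infty$), the stack is a.s.\ infinite. To get infinitely many of \emph{each} type $\xa$ and $\xb$: conditionally on the skeleton of which positions end up unmatched-and-in-the-stack, each such burger is $\xa$ or $\xb$ by an essentially i.i.d.\ fair coin among $\{\xa,\xb\}$ (since $\theta\ind{p}(\xa)=\theta\ind{p}(\xb)$ and matching ignores the burger type for non-flexible dynamics), so by Borel--Cantelli both types appear infinitely often.

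I expect the main obstacle to be making the bound ``unmatched burger at $j$ $\implies$ stack stays positive $\implies$ walk stays above its starting level'' fully rigorous, i.e.\ carefully relating the LIFO stack dynamics to the partial sums of $\pm1$ increments, including the bookkeeping of flexible orders $\xF$ (which can consume either burger type and so must be handled when we claim the per-burger type is a fair coin). A clean way around this is to prove the slightly stronger claim that for fixed $j$, on the event that $w_j$ is unmatched, one has $\rdb{W\ind{p}_{[j,k]}}$ nonempty for all $k\ge j$, and note this event is contained in $\{S_k\ge S_{j-1}\ \forall k\ge j\}$, whose probability is $0$ by recurrence; the flexible-order subtlety only affects \emph{which} burger gets consumed, not whether the stack is nonempty, so it does not interfere with part (1), and for part (2) one conditions on the (recurrent-walk-measurable) set of stack positions first and only then reveals the burger labels. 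Once these reductions are in place everything follows from standard simple-random-walk recurrence and Borel--Cantelli, with no enumeration needed.
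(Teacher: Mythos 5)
There is a genuine gap, and it sits exactly at the step you flagged as the ``main obstacle.'' Your reduction of assertion 1 to one-dimensional recurrence rests on the containment $\{w_j \text{ unmatched}\}\subseteq\{S_k\ge S_{j-1}\ \forall k\ge j\}$, justified via ``stack nonempty $\Rightarrow$ walk stays above its starting level.'' That implication is false: the reduced word can contain unmatched burgers and unmatched orders \emph{simultaneously}, because an order of the wrong type does not consume the burger but still drives $S$ down. Concretely, if $w_j=\xa$ and all subsequent letters are $\xB$, then $\rdb{W_{[j,k]}}=\xa\neq\emptystr$ for every $k$, yet $S_k-S_{j-1}=1-(k-j)\to-\infty$; more generally $\rd{\xa\xB\xB}=\xB\xB\xa$ shows $\rdb{}\neq\emptystr$ is compatible with $S$ dipping arbitrarily far below its starting value. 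So the event ``$w_j$ unmatched'' is not contained in a null event of the walk $S$, and the whole of assertion 1 does not follow from recurrence of a single $\pm1$ walk. This two-type (plus $\xF$) coupling is precisely what makes the statement non-trivial; the paper does not reprove it but cites it as Proposition 2.2 of Sheffield \cite{She11}, whose proof requires a genuinely different argument. (Your scheme would work for $p=0$, where the $\{\xa,\xA\}$ and $\{\xb,\xB\}$ subsystems decouple and each reduces to a 1-d walk, but not for general $p$.)

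For assertion 2, the first half of your argument is sound: since unmatched burgers in $W_{[m,k)}$ cannot be consumed by orders placed before time $m$, one has $|\rdb{W_{(-\infty,k)}}|\ge |\rdb{W_{[m,k)}}|\ge S_{k-1}-S_{m-1}$, and recurrence makes the right-hand side unbounded as $m\to-\infty$, so the backward stack is a.s.\ infinite. But the upgrade to ``infinitely many of \emph{each} type'' via the claim that, conditionally on the set of unmatched positions, the burger types are i.i.d.\ fair coins is unjustified and generally false: whether a burger gets matched depends on its type (an $\xA$ can only consume an $\xa$), so the ``skeleton'' of unmatched positions is not independent of the type labels except in the degenerate cases $p=1$ (only $\xF$'s) or $p=0$ (decoupled subsystems). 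The paper avoids this entirely by deducing assertion 2 from assertion 1 with a finite-energy argument: if with positive probability the stack $\rdb{W\ind{p}_{(-\infty,k)}}$ contained only $N$ letters $\xa$, then independently, with probability $\bpar{\frac{1-p}{4}}^{N+1}$, the letters $W\ind{p}_{[k,k+N]}$ are all $\xA$, which would leave an $\xA$ unmatched and contradict assertion 1 (for $p<1$; the case $p=1$ is handled separately). If you want to salvage your write-up, the cleanest route is to keep your proof that the stack is infinite only as a remark, take assertion 1 as the cited input, and adopt this forcing argument for the type count.
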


\begin{proof}
The first assertion is proved as Proposition 2.2 in \cite{She11}.
For the second assertion, recall that $\rdb{W\ind{p}_{(-\infty,k)}}$ represents a left-infinite stack of burgers.
Now assume for some $k\in\integer$, it contains only $N$ letters $\xa$ with positive probability.
Then, with probability $\Par{\frac{1-p}{4}}^{N+1}$ and independently of $W\ind{p}_{(-\infty,k)}$, all the $N+1$ letters in $W\ind{p}_{[k,k+N]}$ are $\xA$.
This will leave the $\xA$ at position $k+N$ unmatched in $W$, which happens with zero probability according to the first assertion.
This gives a contradiction when $p<1$.
\end{proof}

For each random word $W\ind{p}$, consider a random walk $Z$ on $\integer^2$ starting from the origin: $Z_0 = (0,0)$, and for all $k\in\integer$,
\begin{equation}\label{eq:random walk}
Z_{k+1} - Z_k = \begin{cases}
	(1,0)	&\text{if }W\ind{p}_k = \xa
\\	(-1,0)	&\text{if }W\ind{p}_k \text{ is matched to an } \xa
\\	(0,1)	&\text{if }W\ind{p}_k = \xb
\\	(0,-1)	&\text{if }W\ind{p}_k \text{ is matched to a } \xb
\end{cases}
\end{equation}
By Proposition \ref{prop:pre-continuity}, $Z_k$ is almost surely well-defined for all $k\in\integer$.
A lot of information about the random word $W$ can be read from $Z$.
The main result of \cite{She11} shows that under diffusive rescaling,
$Z$ converges to a Brownian motion in $\real^2$ with a diffusivity matrix that depends on $p$, demonstrating a phase transition at $p=1/2$.

Let $(X,Y)=Z$. Then $X$ represents the net hamburger count and $Y$ the net cheeseburger count.
Set $i_0 = \sup\set{i<0}{X_i=-1}$ and $j_0 = \inf\set{j>0}{X_j=-1}$.
Let $N_0$ be the number of times that $X$ visits the state $0$ between time $i_0$ and $j_0$.
We shall see in Section \ref{sub:proof} that $N_0$ is exactly the degree of root vertex in the infinite cFK-random map. Below we prove that the distribution of $N_0$ has an exponential tail, that is, there exists constants $A$ and $c>0$ such that $\proba(N_0\geq x)\leq Ae^{-cx}$ for all $x\geq 0$.

\begin{figure}[ht!]
\begin{center}
\includegraphics[scale=0.8]{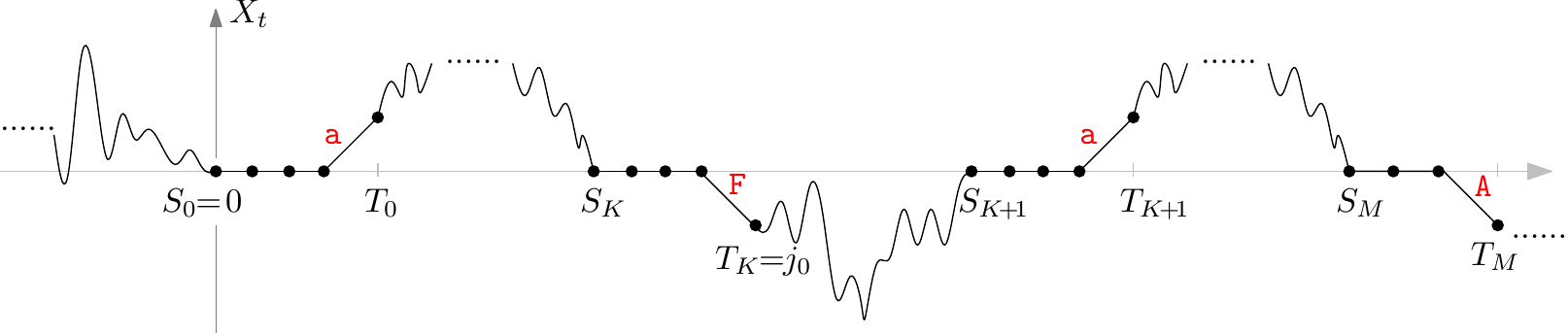}
\caption{The decomposition of $N_0^+$ into intervals $[S_k,T_k]$}.
\label{fig:N0 decomposition}
\end{center}
\end{figure}

\begin{prop}\label{lemma:exponential tail}
$N_0$ has an exponential tail distribution for all $p\in[0,1]$.
\end{prop}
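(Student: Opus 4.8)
The plan is to decompose $N_0$ into contributions from the positive and negative time directions, say $N_0 = N_0^- + N_0^+$ where $N_0^+$ counts visits of $X$ to $0$ on the interval $[0, j_0]$ and $N_0^-$ counts visits on $[i_0, 0]$; by the symmetry of $\theta\ind{p}$ under exchanging $\xa \leftrightarrow \xb$ it suffices to bound $N_0^+$, and a union bound $\proba(N_0 \geq 2x) \leq 2\proba(N_0^+ \geq x)$ then gives the result. So I would focus on showing $\proba(N_0^+ \geq x) \leq A e^{-cx}$. Here $j_0 = \inf\{j > 0 : X_j = -1\}$ is the first time the net hamburger count drops to $-1$, and the point is that while $X$ keeps returning to $0$, each ``excursion'' has a uniformly positive chance of driving $X$ down to $-1$ before coming back, so the number of returns is stochastically dominated by a geometric random variable.

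The key step is to find the right stopping times. Following the figure (Fig.~\ref{fig:N0 decomposition}), let $S_0 = 0$ and inductively let $T_k = \inf\{j > S_k : X_j \in \{-1, 0\}\}$ and, if $X_{T_k} = 0$, set $S_{k+1} = T_k$; the construction stops at the first $k$ with $X_{T_k} = -1$, which is exactly $j_0$, and the number of such steps is comparable to $N_0^+$. The crucial claim is that, conditionally on $\mathcal F_{S_k}$ (the sigma-algebra generated by $W\ind{p}_{[\,\cdot\,,S_k)}$ for all left endpoints, equivalently the information determining $X$ up to time $S_k$), the event $\{X_{T_k} = -1\}$ has probability bounded below by some $\delta = \delta(p) > 0$ uniformly in $k$. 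Granting this claim, the number of completed steps is dominated by a Geometric$(\delta)$ variable, hence has an exponential tail; and since between consecutive times $S_k$ and $S_{k+1}$ the walk $X$ visits $0$ a number of times that itself must be controlled, I would also need each such ``inter-return count'' to have an exponential tail — but this is clear because that count is just the number of visits to $0$ in an excursion of $X$ that stays $\geq 0$, i.e.\ a return time of a mean-zero, bounded-increment walk (or can be read off directly from the i.i.d.\ structure of fresh letters), so it is geometrically dominated as well. Combining a geometric number of steps with geometrically-dominated contributions per step (and a standard argument that a geometric sum of exponential-tail variables still has an exponential tail) yields the bound.

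The main obstacle is establishing the uniform lower bound $\delta(p) > 0$ for $\proba(X_{T_k} = -1 \mid \mathcal F_{S_k}) $ when $p < 1$. The subtlety is that the increments of $X$ in \eqref{eq:random walk} are \emph{not} i.i.d.: a step $-1$ occurs when a fresh letter $W\ind{p}_j$ happens to match a previously-produced $\xa$, so the law of the increment at time $j$ depends on the current stack. The way around this is to exhibit an explicit pattern of \emph{fresh} letters after time $S_k$ that forces $X$ down to $-1$ regardless of the stack content: for instance, reading a block of consecutive $\xA$'s — each of which either consumes a hamburger (an $X$-decrement) or, if no hamburger is available, stays unmatched and (by Proposition \ref{prop:pre-continuity}, item 1) would contradict almost-sure matching — so in fact along the true trajectory such a block must decrement $X$ each time; more carefully, one observes that if at time $S_k$ the net count is $X_{S_k} = 0$, the stack $\rdb{W\ind{p}_{(-\infty,S_k)}}$ contains some burgers and exactly the right bookkeeping shows that a bounded-length block of $\xA$ and $\xB$ letters (which has probability at least $((1-p)/4)^{\text{length}} > 0$, independent of $\mathcal F_{S_k}$) brings $X$ to $-1$ while keeping it from returning to $0$ first. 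Making this ``explicit pattern'' argument precise — in particular checking that the block length needed is bounded uniformly and that the pattern genuinely avoids an intermediate return to $0$ — is where the real work lies; once that is done, the geometric domination and the passage to an exponential tail are routine.
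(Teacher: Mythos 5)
Your bound on $N_0^+$ is sound in outline and is essentially the paper's own argument; in fact the ``explicit pattern'' you worry about is simpler than you think: since almost surely every letter of $W\ind{p}$ is matched (Proposition \ref{prop:pre-continuity}), a fresh $\xA$ read while $X$ sits at $0$ always produces an increment $-1$, so a single letter $\xA$ at the departure time suffices. This is exactly the paper's stopping time $M=\inf\{m\ge 0: W_{T_m}=\xA\}$, which is geometric, and conditionally on $M$ the sojourn lengths $T_i-S_i$ at $0$ are independent geometric variables, giving the exponential tail of $N_0^+$ for $p<1$.

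The genuine gap is the reduction from $N_0$ to $N_0^+$. The symmetry of $\theta\ind{p}$ under exchanging $\xa\leftrightarrow\xb$ (and $\xA\leftrightarrow\xB$) swaps the hamburger count $X$ with the cheeseburger count $Y$; it does not relate the negative-time visits counted by $N_0^-$ (on $[i_0,0]$) to the positive-time visits counted by $N_0^+$. What you would need is a time-reversal symmetry, and that fails for $p>0$: reading the word backwards turns burgers into orders, $\theta\ind{p}(\xa)=\tfrac14\neq\tfrac{1-p}{4}=\theta\ind{p}(\xA)$, and the flexible order $\xF$ has no lowercase counterpart. Nor can you simply rerun the positive-time argument backwards: at negative times the increments of $X$ involve matchings with letters arbitrarily far away (including at positive times), so the filtration and strong Markov structure you exploit after time $0$ is not available there. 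Hence $\proba(N_0^-\geq x)$ is left uncontrolled and the union bound $\proba(N_0\geq 2x)\le 2\proba(N_0^+\geq x)$ does not follow. The paper closes this with a different idea: it proves that, conditionally on $N_0$, the variable $N_0^+$ is uniform on $\{1,\ldots,N_0\}$ — this comes from the finite model, where under $\proba\ind{p}_n$ the root index is uniform among the $2n$ letters, combined with the $D\sub{loc}$-continuity of $w\mapsto(N_0,N_0^+)$ and Proposition \ref{prop:word limit} — and the uniformity immediately transfers the exponential tail from $N_0^+$ to $N_0$ (e.g.\ $\proba(N_0\geq x)\le 2\,\proba(N_0^+\geq x/2)$). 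You need this step, or an honest argument for $N_0^-$, to complete the proof.
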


\begin{proof}
First let us consider $N_0^+$,
the number of times that $X$ visits the state $0$ between time $0$ and $j_0$.
Remark that at positive time, the process $X$ is adapted to the natural filtration $(\filtr_k)_{k\geq 0}$, where $\filtr_k$ is the $\sigma$-algebra generated by $W_{[0,k)}$.
Define two sequences of $\filtr$-stopping times $(S_m)_{m\geq 0}$ and $(T_m)_{m\geq 0}$ by $S_0 = 0$, and that for all $m\geq 0$,
\begin{align*}
	T_m		&= \inf\set{k>S_m}{X_k\neq 0}
\\	S_{m+1} &= \inf\set{k>T_m}{X_k	=  0}
\end{align*}
The sequence $S$ (resp. $T$) marks the times that $X$ arrives at (resp. departs from) the state $0$.
Therefore the total number of visits of the state $0$ between time $0$ and $T_m$ is
$\sum_{i=0}^{m} (T_i - S_i)$,
see Fig.~\ref{fig:N0 decomposition}.

By construction, $j_0$ is the smallest $T_m$ such that $X_{T_m} = -1$.
On the other hand, we have $X_{T_m}\!\! \in\{-1,+1\}$ for all $m$ and 
\begin{align*}
	X_{T_m} =+1	\quad	&\iff\quad	W_{T_m} = \xa 			\label{eq:+1 = a}
\\	X_{T_m} =-1 \quad	&\iff\quad	W_{T_m} = \xA\text{ or }
									W_{T_m} \text{ is an } \xF \text{ matched to an } \xa
\end{align*}
Consider the stopping time
\begin{equation}\label{eq:def M}
	M=\inf\set{m\geq 0}{W_{T_m}=\xA}
\end{equation}
Then we have $j_0\leq T_M$, and therefore
\begin{equation}
	N_0^+ \leq \sum_{m=0}^M (T_m - S_m)
\end{equation}
On the other hand, $M$ is the smallest $m$ such that, starting from time $S_m$, an $\xA$ comes before an $\xa$.
Therefore $M$ is a geometric random variable of mean $\frac{1-p}{p}$.

Assume $p<1$ so that $M$ is almost surely finite.
Fix an integer $m\geq 0$.
By the strong Markov property, conditionally to $\{M=m\}$,
the sequence $(T_i-S_i, i=0\ldots m-1)$ is i.i.d.,
and each term in the sequence has the same law as the first arrival time
of $\xa$ in the sequence $(W_k)_{k\geq 0}$ conditioned not to contain $\xA$.
In other words, conditionally to $\{M=m\}$, $(T_i-S_i, i=0\ldots m-1)$ is
an i.i.d.\ sequence of geometric random variables of mean $\frac{1}{2+p}$.
Similarly, conditionally to $\{M=m\}$,
$T_m-S_m$ is a geometric random variable of mean $\frac{1-p}{2+p}$ independent from the sequence $(T_i-S_i, i=0\ldots m-1)$.
Then, a direct computation shows that the exponential moment $\expect[e^{\lambda N_0^+}]$ is finite for some $\gamma>0$.
And by Markov's inequality, the distribution of $N_0^+$ has an exponential tail when $p<1$.

Now we claim that conditionally to the value of $N_0$, the variable 
$N_0^+$ is uniform on $\{1,\ldots,N_0\}$ which implies that $N_0$ also has an exponential tail distribution.
To see why the conditional law is uniform, consider $N_0$ and $N_0^+$ for finite words defined in the same way as for the infinite word $W\ind{p}_\infty$.
Note that for a finite word $w$ the process $X$ does not necessarily hit $-1$ at negative (resp. positive) times.
In this case we just replace $i_0$ (resp. $j_0$) by the infimum (resp. supremum) of the domain of $w$.
Then, $w\mapsto (N_0,N_0^+)$ is a $D\sub{loc}$-continuous function defined on the union of $\cup_{n\geq 0}\W_n$ and the support of $W\ind{p}_\infty$.
Therefore for any integers $k\leq m$,
\begin{equation}
					\proba\ind{p}_n(N_0=m,N_0^+=k)
\cvg{}{n\to\infty}	\proba\ind{p}_\infty(N_0=m,N_0^+=k)
\end{equation}
But, given the sequence of letters in a word, the law $\proba\ind{p}_n$ chooses the letter of index 0 uniformly at random among all the letters.
A simple counting shows that for all $1\!\leq\! k,k'\!\leq\! m$,
we have $\proba\ind{p}_n(N_0=m,N_0^+=k)  =  \proba\ind{p}_n(N_0=m,N_0^+=k')$.
Letting $n\to\infty$ shows that the conditional law of $N_0^+$ given $N_0$ is uniform under $\proba\ind{p}_\infty$.
\end{proof}

\section{The hamburger-cheeseburger bijection}\label{sec:maps}

\subsection{Construction}
In this section we present (a slight variant of) the hamburger-cheeseburger bijection of Sheffield. 
We refer to \cite{She11} for the proof of bijectivity and for historical notes.

We define the hamburger-cheeseburger bijection $\wt{\Psi}$ on a subset of the space $\W$, and it takes values in the space $\wt{\M}$ of \emph{doubly-rooted planar maps with a distinguished subgraph}, that is, planar maps with two distinguished corners and one distinguished subgraph.
We can write this space as
\begin{equation}
	\wt{\M} = \set{(\map{M,G,s})}{ (\map{M,G})\in\M
								\text{ and }\map{s}\text{ is a corner of }\map{M}}
\end{equation}
Note that the second root $\map{s}$ may be equal to or different from the root of $\map{M}$.
We define in the same way $\wt{\M}_n$, the doubly-rooted version of the space $\M_n$.
Its cardinal is $2n$ times that of $\M_n$.

We start by constructing $\wt{\Psi}:\W_n\to\wt{\M}_n$ in three steps.
The first step transforms a word in $\W_n$ into a decorated planar map called \emph{arch graph}.
The second and the third step apply graph duality and local transformations to the arch graph to get a \emph{tree-rooted map}, and then \emph{a subgraph-rooted map} in $\wt{\M}_n$.

\paragraph{Step 1: from words to arch graphs.}
Fix a word $w\in\W_n$.
Recall from Section \ref{sec:words} the construction of the non-crossing arch diagram associated to $w$.
In particular since $\overline{w}= \emptyset$, there is no half-arch.
We link neighboring vertices by unit segments $[j-1,j]$ and link the first vertex to the last vertex by an edge that wires around the whole picture on either side of the real axis, without intersecting any other edges.
This defines a planar map $\map{A}$ of $2n$ vertices and $2n$ edges.
In $\map{A}$ we distinguish edges coming from arches and the other edges.
The latter forms a simple loop passing through all the vertices.

We further decorate $\map{A}$ with additional pieces of information.
Recall that the word $w$ is indexed by an interval of the form $I_k=\{-k,\ldots,2n-1-k\}$ where $0\leq k<2n$. We will mark the oriented edge $\map{r}$ from the vertex $0$ to the vertex $-1$, and the oriented edge $\map{s}$ from the first vertex ($-k$) to the last vertex ($2n-1-k$).
If $k=0$, then $\map{r}$ and $\map{s}$ coincide.
Furthermore, we mark each arch incident to an $\xF$-vertex by a star $*$. (See Fig.~\ref{fig:step 1-2})
We call the decorated planar map $A$ the \emph{arch graph} of $w$.
One can check that it completely determines the underlying word $w$.

\paragraph{Step 2: from arch graphs to tree-rooted maps.}
We now consider the dual map $\map{\Delta}$ of the arch graph $\map{A}$.
Let $\map{Q}$ be the subgraph of $\map{\Delta}$ consisting of edges whose dual edge is on the loop in $\map{A}$.
We denote by $\map{\Delta\backslash Q}$ the set of remaining edges of $\map{\Delta}$ (that is, the edges intersecting one of the arches).

\begin{prop} The map $\map{\Delta}$ is a triangulation, the map $\map{Q}$ is a quadrangulation with $n$ faces and $\map{\Delta}\backslash \map{Q}$ consists of two trees.
\qed
\end{prop}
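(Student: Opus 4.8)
The plan is to analyze the arch graph $\map{A}$ directly and transfer the structure to its dual via the standard correspondence between faces/vertices/edges. First I would observe that $\map{A}$ consists of $2n$ vertices lying on a simple cycle (the ``loop'' made of the segments $[j-1,j]$ together with the wrap-around edge), plus $n$ non-crossing chords (the arches), $n/\ldots$ — more precisely, since $\rd{w}=\emptystr$ and each matched pair contributes one arch while there are $2n$ letters, there are exactly $n$ arches. So $\map{A}$ has $v(\map{A})=2n$, $e(\map{A})=2n$, and by Euler's relation $f(\map{A})=2$. Hmm — that cannot be right for a triangulation; let me recount: the loop uses $2n$ edges and the arches add $n$ more, so $e(\map{A}) = 3n$ and $f(\map{A}) = 3n - 2n + 2 = n+2$. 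The key geometric claim is that every face of $\map{A}$ is a triangle. This I would prove by induction on $n$ (equivalently, on the nesting structure of the non-crossing chord diagram): the innermost arch, say joining consecutive chord-endpoints $j-1$ and $j$ wait — an innermost arch together with the two cycle-edges it cuts off bounds a region; because the arch diagram is non-crossing and there are no half-arches, peeling off an innermost arch removes exactly one triangular face and reduces to a strictly smaller non-crossing configuration. Carrying this induction through, and checking the base case $n=1$ by hand, gives that $\map{\Delta}$, the dual of $\map{A}$, has all vertices of degree $3$, i.e.\ $\map{\Delta}$ is a triangulation.

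Next, for $\map{Q}$: its edges are exactly the duals of the $2n$ cycle-edges of $\map{A}$, so $e(\map{Q}) = 2n$. The vertices of $\map{\Delta}$ are the faces of $\map{A}$; the cycle of $\map{A}$ separates the plane into an ``inside'' and an ``outside'' region, each of which is further subdivided by the arches drawn on that side (arches incident to an $\xa$-vertex above, to a $\xb$-vertex below). I would argue that the faces of $\map{Q}$ correspond to the $n$ arches of $\map{A}$: each arch $\alpha$, together with the portion of the cycle it subtends, is crossed by the dual edges of the cycle in a way that makes the dual picture a quadrilateral — two dual-cycle edges on each side of $\alpha$'s dual edge, but the dual edge of $\alpha$ itself has been removed (it lies in $\map{\Delta}\backslash\map{Q}$), so what remains around each arch is a $4$-gon. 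Counting: $f(\map{Q}) = n$, $e(\map{Q}) = 2n$, hence $v(\map{Q}) = 2n - n + 2 = n+2$, consistent with $\map{\Delta}$ being a triangulation with $n+2$ vertices, and $4n = \sum_{\text{faces}}\deg = 2 e(\map{Q})$ forces every face of $\map{Q}$ to have degree exactly $4$. So $\map{Q}$ is a quadrangulation with $n$ faces.

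Finally, for $\map{\Delta}\backslash\map{Q}$: these are the $n$ duals of the arches. Since the arches split into those drawn in the upper half-plane and those in the lower half-plane, and each family is a non-crossing forest of chords on a line of vertices, the dual edges of the upper arches connect the faces lying in the ``outside''/upper region and those of the lower arches connect the faces in the ``inside''/lower region; I would check that each family, being the dual of a maximal non-crossing chord system on one side, forms a tree (connected and acyclic: acyclic because a cycle among these dual edges would enclose some arch-endpoint and contradict non-crossingness/the absence of half-arches; connected because the nesting poset of a non-crossing diagram is a forest and here, with $\rd{w}=\emptystr$, it collapses to a single tree on each side). Hence $\map{\Delta}\backslash\map{Q}$ is exactly two trees, spanning the upper-region vertices and the lower-region vertices respectively. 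The main obstacle I expect is making the face-by-face local analysis around a single arch fully rigorous — i.e., verifying that in the dual map the removal of the arch's dual edge leaves precisely a $4$-gon and that the two trees are vertex-disjoint and together span all of $\map{\Delta}$; this is the kind of claim that is visually obvious from Fig.~\ref{fig:step 1-2} but requires a careful induction on the chord-nesting structure to pin down without hand-waving.
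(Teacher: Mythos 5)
The first part of your argument contains a genuine error. By the paper's definition a triangulation is a map all of whose \emph{faces} have degree three; under duality the faces of $\map{\Delta}$ correspond to the \emph{vertices} of $\map{A}$ (the degree of a dual face equals the degree of the corresponding primal vertex), so what has to be checked is that every vertex of $\map{A}$ has degree $3$ --- not that every face of $\map{A}$ is a triangle. That latter claim is simply false: an innermost arch joining two consecutive vertices cuts off a bigon, the face lying under a long arch has degree equal to the number of loop edges it subtends, and already for $n=1$ the arch graph has three bigonal faces; it is also incompatible with your own Euler count, since $n+2$ triangular faces would require $3(n+2)=2e(\map{A})=6n$. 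So the induction on innermost arches is proving a false statement, and even if it were true, ``the dual has all vertices of degree $3$'' would not make the dual a triangulation. The correct argument is a one-liner: since $\rd{w}=\emptystr$, every letter is matched, so each vertex of $\map{A}$ is incident to exactly one arch and exactly two loop edges, hence has degree $3$, and therefore every face of $\map{\Delta}$ is a triangle ($2n$ of them, with $3n$ edges and $n+2$ vertices).

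With this fixed, the rest of your proposal essentially works but should be reorganized. Each of the $2n$ triangles of $\map{\Delta}$ has exactly one side dual to an arch (again because each vertex of $\map{A}$ carries exactly one arch), so deleting the $n$ arch-dual edges glues the triangles in pairs into $n$ faces of degree $3+3-2=4$; note that your counting step is circular as written --- knowing $f(\map{Q})=n$ and $e(\map{Q})=2n$ only gives \emph{average} face degree $4$ and does not by itself force every face to be a quadrilateral. Connectivity of $\map{Q}$ also deserves a word: an edge cut of $\map{\Delta}$ is dual to a cycle of $\map{A}$, and the arches form a perfect matching of the vertices of $\map{A}$, hence contain no cycle, so removing their duals cannot disconnect $\map{\Delta}$. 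Your two-trees argument is correct in outline: inside each of the two disks bounded by the loop, the face-adjacency graph across arches is connected, and it is acyclic because each arch separates its disk; but connectedness here has nothing to do with $\rd{w}=\emptystr$ or with a ``nesting poset'' --- it holds for any family of non-crossing chords in a disk.
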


We denote by $\map{T}$ and $\map{T}^\dual$ the two trees in $\map{\Delta\backslash Q}$, with $\map{T}$ corresponding to faces of the arch graph in the upper half plane.
Then $\map{Q}$,\! $\map{T}$\! and\! $\map{T}^\dual$ form a partition of edges in the triangulation $\map{\Delta}$.
Note that $\map{T}$ and $\map{T}^\dual$ give the (unique) bipartition of vertices of $\map{Q}$.
Let $\m $ be the planar map associated to $\map{Q}$ by Tutte's bijection, such that $\m $ has the same vertex set as $\map{T}$.
(The latter prescription allows us to bypass the root, and define Tutte's bijection from unrooted quadrangulations to unrooted maps.)
We thus obtain a couple $(\map{M,T})$ in which $\m $ is a map with $n$ edges and $\map{T}$ is spanning tree of $\m $.
Remark that $\map{T}^\dual$ is the dual spanning tree of $\map{T}$ in the dual map $\m^\dual$.
This relates the duality of maps with the duality on words which consists of exchanging $\xa$ with $\xb$ and $\xA$ with $\xB$.

Fig. \refnote{fig:step 1-2}{(a)} summarizes the mapping from words to tree-rooted maps (Step 1 and 2) with an example.
Note that we have omitted the two roots and the stars on the arch graph in the above discussion.
But since graph duality and Tutte's bijection provide canonical bijections between edges, the roots and stars can be simply transferred from the arches in $\map{A}$ to the edges in $\m $.
With the roots and stars taken into account, it is clear that $w\mapsto(\map{M,T})$ is a bijection from $\W_n$ onto its image.

\begin{figure}[ht!]
\captionsetup{width=0.8\textwidth}
\begin{center}
\includegraphics[scale=1]{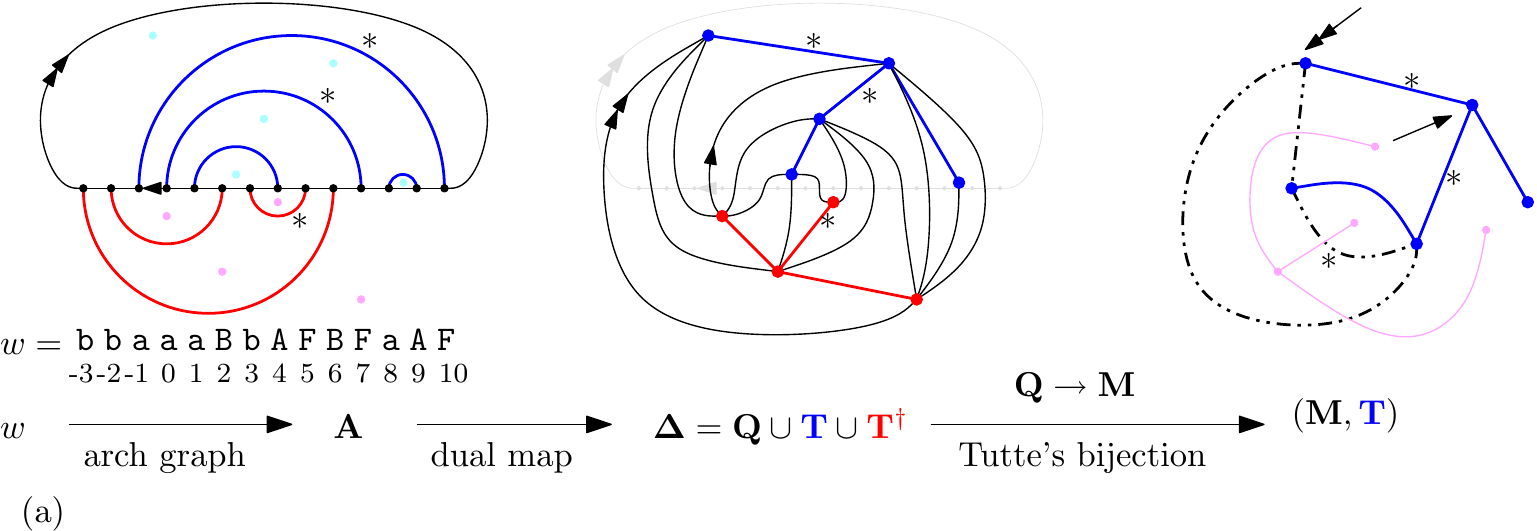}
\includegraphics[scale=1]{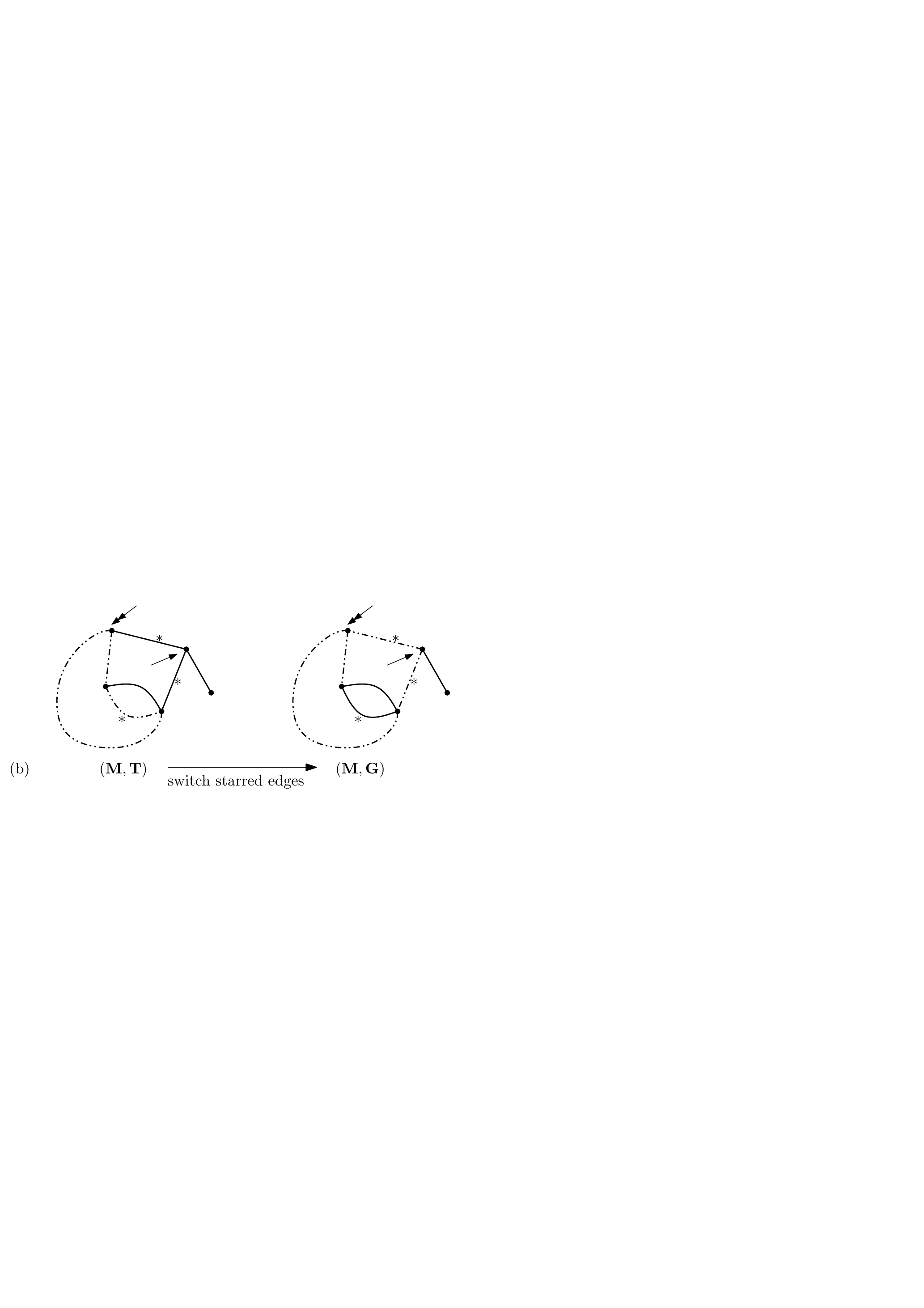}
\caption{Construction of the hamburger-cheeseburger bijection.
(a) From word to tree-rooted map. (b) From tree-rooted map to subgraph-rooted map.
}
\label{fig:step 1-2}
\end{center}
\end{figure}
\paragraph{Step 3: from tree-rooted maps to subgraph-rooted maps.}
Now we ``switch the status'' of every starred edge in $\m $ relative to the spanning tree $\map{T}$.
That is, if a starred edge is not in $\map{T}$, we add it to $\map{T}$; if it is already in $\map{T}$, we remove it from $\map{T}$.
Let $\map{G}$ be the resulting subgraph.
See Fig. \refnote{fig:step 1-2}{(b)} for an example.

Recall that there are two marked corners $\map{r}$ and $\map{s}$ in the map $\m $.
By an abuse of notation, from now on we denote by $\m $ the \emph{rooted} map with root corner $\map{r}$.
Then, the hamburger-cheeseburger bijection is defined by $\wt{\Psi}(w) = (\map{M,G,s})$.
Let $\Psi(w) = (\map{M,G})$ be its projection obtained by forgetting the second root corner.
We denote by $\card{\xF}w$ the number of letters $\xF$ in $w$, and by $\ell$ the number of loops associated to the corresponding subgraph-rooted map $(\map{M,G})$.
\begin{thm}[Sheffield \cite{She11}] \label{thm:shebij}
The mapping $\wt{\Psi}:\!\W_n\!\to\!\wt{\M}_n$ is a bijection such that $\ell=1+\card{\xF}w$ for all $w\in\W_n$.
And $\,\proba[Q]\ind{q}_n$ is the image measure of $\,\proba\ind{p}_n$ by $\Psi$ whenever
$$p=\frac{\sqrt{q}}{2+\sqrt{q}}.$$
\end{thm}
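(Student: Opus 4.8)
The plan is to verify the three assertions in turn. First, that $\wt\Psi$ is a bijection: this is the content of Sheffield's original construction, so I would recall the inverse map rather than re-derive it. Given a doubly-rooted subgraph-rooted map $(\map{M,G,s})$, one recovers the tree-rooted map $(\map{M,T})$ by switching the status of each edge of $\map{M}$ that is separated from the rest by one of the loops tracing $\partial\map{G}$ (these become the starred edges); applying Tutte's bijection backwards gives the quadrangulation $\map{Q}$, whose radial/dual structure reconstructs the triangulation $\map{\Delta}$, hence the arch graph $\map{A}$ and the word $w$, with the index shift $k$ determined by the position of $\map{s}$ relative to $\map{r}$. Each of Steps 1--3 is individually invertible once one checks that the decorations (two roots, stars) are carried along canonically, which is the remark already made in Step 2 and Step 3 of the construction. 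So bijectivity reduces to Sheffield's theorem plus bookkeeping of the decorations.

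Second, the identity $\ell = 1 + \card{\xF}w$. Here I would use Equation~\eqref{eq:loop number}, $\ell(\map{M,G}) = e(\map{G}) + 2c(\map{G}) - v(\map{M})$, together with the effect of Step 3 on $(\map{M,T})$. Before Step 3, $\map{T}$ is a spanning tree, so $e(\map{T}) = v(\map{M}) - 1$ and $c(\map{T}) = 1$, giving $\ell(\map{M},\map{T}) = (v(\map{M})-1) + 2 - v(\map{M}) = 1$. Switching the status of one starred edge $\map{e}$ changes $e(\map{G})$ by $\pm 1$; the point is that $c(\map{G})$ changes in the opposite way, because a starred edge, being a single edge whose two endpoints are joined through the loop it carries, behaves like a "bridge" for the current subgraph: adding it merges two components (decreasing $c$ by $1$ while increasing $e$ by $1$), removing it splits one (increasing $c$ by $1$ while decreasing $e$ by $1$). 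In both cases $e(\map{G}) + 2c(\map{G})$ increases by exactly $1$ per starred edge, and $v(\map{M})$ is unchanged, so each star contributes $+1$ to $\ell$. Since the starred edges are exactly the arches incident to an $\xF$-vertex, their number is $\card{\xF}w$, and we get $\ell = 1 + \card{\xF}w$. The claim that switching a starred edge changes $c(\map{G})$ by $\mp 1$ is the one point that needs genuine justification; it follows from the fact that a loop in the arch diagram around an $\xF$-arch separates the plane, so the two sides of the starred edge lie in distinct faces/components of the current configuration — I would phrase this via the planarity of the arch graph and the non-crossing property of the arches.

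Third, the pushforward statement. By the bijectivity and the loop identity, for $(\map{M,G}) = \Psi(w)$ we have, summing over the $2n$ preimages of $(\map{M,G})$ in $\wt\M_n$ under $\wt\Psi$ (one for each choice of second root $\map{s}$),
\begin{equation*}
\proba\ind{p}_n\bigl(\Psi^{-1}(\map{M,G})\bigr) \;\propto\; \sum_{\map{s}} \prod_j \theta\ind{p}(w^{(\map{s})}_j) \;=\; 2n \cdot \Bigl(\tfrac14\Bigr)^{2n - \card{\xF}w}\Bigl(\tfrac{1-p}{4}\Bigr)^{\#\{\xA,\xB\}} \Bigl(\tfrac{p}{2}\Bigr)^{\card{\xF}w},
\end{equation*}
where I use that the multiset of letters of $w$ does not depend on the index shift $\map{s}$, that the number of burgers equals the number of orders (each is $n$, since $\rd{w} = \emptystr$), and that the number of hard orders $\#\{\xA,\xB\}$ equals $n - \card{\xF}w$. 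Collecting the powers of $1/2$ and $(1-p)$, this is proportional to $\bigl((1-p)\bigr)^{-\card{\xF}w}\,p^{\card{\xF}w} \cdot (\text{constant depending only on }n)$, i.e.\ proportional to $\bigl(\tfrac{p}{1-p}\bigr)^{\card{\xF}w} = \bigl(\tfrac{p}{1-p}\bigr)^{\ell - 1}$. Comparing with $\proba[Q]\ind{q}_n(\map{M,G}) \propto q^{\ell/2}$, the two measures agree iff $\sqrt q = \tfrac{p}{1-p}$, which rearranges to $p = \tfrac{\sqrt q}{2 + \sqrt q}$ — wait, $\tfrac{p}{1-p} = \sqrt q$ gives $p = \tfrac{\sqrt q}{1 + \sqrt q}$; the stated normalization $p = \tfrac{\sqrt q}{2+\sqrt q}$ corresponds instead to counting orders rather than all letters, i.e.\ to $\tfrac{p/2}{(1-p)/4} \cdot \tfrac12 = \tfrac{p}{1-p}$ reweighted against the burger factor, so I would track the powers of $1/4$ versus $1/2$ carefully: the $\xF$ carries weight $p/2$ while each of the $2\card{\xF}w$-many "missing hard orders" would have carried $(1-p)/4$, giving a relative factor $\tfrac{p/2}{(1-p)/4} = \tfrac{2p}{1-p}$ per flexible order, hence the measure is $\propto \bigl(\tfrac{2p}{1-p}\bigr)^{\card{\xF}w}$... no, each $\xF$ replaces exactly one hard order. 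Let me just say: the relative weight per flexible order is $\dfrac{\theta\ind{p}(\xF)}{\theta\ind{p}(\xA)} = \dfrac{p/2}{(1-p)/4} = \dfrac{2p}{1-p}$; hmm, matching $\dfrac{2p}{1-p} = \sqrt q$ would give yet another formula. The honest statement is that the main obstacle is getting this constant right, and it is to be settled by a careful letter-count. I expect the correct accounting — weighing an $\xF$-letter against the "generic" letter it would otherwise be, within the fixed-length word $w$ — to produce exactly the factor whose fixed point is $p = \tfrac{\sqrt q}{2+\sqrt q}$; I would verify it by plugging in $q = 1$, where $p = 1/3$ should make $\proba\ind{1/3}_n$ push forward to the uniform measure on $\M_n$, consistent with the $q=1$ discussion in Section~\ref{sec:cFK}.
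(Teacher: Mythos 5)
Your treatment of the identity $\ell=1+\card\xF w$ is internally inconsistent and the key claim is wrong. If adding a starred edge really merged two components ($e(\map{G})$ up by $1$, $c(\map{G})$ down by $1$), then by \eqref{eq:loop number} the quantity $e(\map{G})+2c(\map{G})$ would change by $1-2=-1$, i.e.\ $\ell$ would \emph{decrease}; only your removal case gives $+1$, so the sentence ``in both cases $e(\map{G})+2c(\map{G})$ increases by exactly $1$'' contradicts your own case analysis. The correct mechanism is the opposite of your ``bridge'' picture for additions: a starred edge not in $\map{T}$ has its endpoints already joined in the current subgraph, so switching it on closes a cycle ($c$ unchanged, $e$ up by $1$), while a starred edge in $\map{T}$ is one whose removal disconnects its component ($c$ up by $1$, $e$ down by $1$); in both of \emph{those} cases $e+2c$ increases by $1$. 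Establishing that these are indeed the configurations occurring at every $\xF$-edge (and that the switches can be performed in any order with this effect) is the genuinely nontrivial content; it uses the non-crossing matching structure of the word, not a generic planarity remark. The paper does not reprove this — it cites Sheffield for both bijectivity and the loop count — which is also a legitimate route for you, but the argument you wrote does not establish it.

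More seriously, the pushforward statement — the only part the paper proves in detail — is left unfinished, and you abandon it one line before the end. The paper's computation: for $w\in\W_n$, $\rd{w}=\emptystr$ forces $\card\xa w+\card\xb w=n$ and $\card\xA w+\card\xB w+\card\xF w=n$, hence $\proba\ind{p}_n(w)\propto\big(\tfrac14\big)^n\big(\tfrac{1-p}{4}\big)^{n-\card\xF w}\big(\tfrac{p}{2}\big)^{\card\xF w}\propto\big(\tfrac{2p}{1-p}\big)^{\card\xF w}$, and with $p=\tfrac{\sqrt q}{2+\sqrt q}$ one has $\tfrac{2p}{1-p}=\sqrt q$, so each word carries weight $\sqrt q^{\,\ell-1}\propto q^{\ell/2}$; summing over the $2n$ choices of second root (which do not change $\ell$, since $\ell$ depends only on $(\map{M,G})$) gives the claim. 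You in fact computed the correct per-$\xF$ factor $\theta\ind{p}(\xF)/\theta\ind{p}(\xA)=\tfrac{p/2}{(1-p)/4}=\tfrac{2p}{1-p}$, but then dismissed it as giving ``yet another formula'': solving $\tfrac{2p}{1-p}=\sqrt q$ gives exactly $p=\tfrac{\sqrt q}{2+\sqrt q}$, so the accounting you walked away from is precisely the finish. As written, your intermediate claim that the weight is $\propto\big(\tfrac{p}{1-p}\big)^{\card\xF w}$ is wrong (it loses the factor $2$ between $\theta\ind{p}(\xF)=p/2$ and $\theta\ind{p}(\xA)=(1-p)/4$), your displayed exponent bookkeeping double-counts the $\tfrac14$'s, and the proposal explicitly declines to settle the constant — so the theorem is not proved. (A minor further point: the $2n$ preimages of a given $(\map{M,G})$ need not have the same letter multiset — the split between $\xa/\xb$ and $\xA/\xB$ can vary with the second root — but this is harmless because $\theta\ind{p}$ gives equal weight within each of these pairs, so only the counts $n$, $n-\card\xF w$, $\card\xF w$ matter.)
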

\proof The proof of this can be found in \cite{She11}.
However we include a proof of the second fact to enlighten the relation $ p = \frac{ \sqrt{p}}{2 + \sqrt{q}}$.
For $w\in\W_n$, since $\rd{w}=\emptystr$, we have
			$\card\xa w + \card\xb w = \card\xA w + \card\xB w + \card\xF w = n$.
Therefore, when $p=\frac{\sqrt{q}}{2+\sqrt{q}}$,
		\begin{align*}
			\proba\ind{p}_n(w)
				&\propto\Par{\frac{1}{4}}^{\card\xa w + \card\xb w}
						 \Par{\frac{1-p}{4}}^{\card\xA w + \card\xB w}
						 \Par{\frac{p}{2}}^{\card\xF w}
			\\	&	=	\Par{\frac{1}{4}}^n
						\Par{\frac{1-p}{4}}^{n-\card\xF w}
						\Par{\frac{p}{2}}^{\card\xF w}
				\propto\Par{\frac{2p}{1-p}}^{\card\xF w}	= \sqrt{q}^{\ell-1}
		\end{align*}
After normalization, this shows that $\proba[Q]\ind{q}_n$ is the image measure of $\proba\ind{p}_n$ by $\Psi$.
\endproof 

\begin{prop} \label{prop:extcontinu}
We can extend the mapping $\Psi$ to $\W\to\M$ so that it is $\proba\ind{p}_\infty$-almost surely continuous with respect to $D\sub{loc}$ and $d\sub{loc}$, for all $p\in[0,1]$.
\end{prop}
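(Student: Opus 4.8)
The plan is to give an explicit extension by running the infinite analogue of Steps~1--3 of the construction on those words all of whose letters are matched, and then to show that at $\proba\ind{p}_\infty$-almost every such word the ball of any fixed radius in the resulting map depends on only a finite window of the word, so that $\Psi$ is locally constant there. Concretely, let $w$ be a word in which every letter is matched --- by Proposition~\ref{prop:pre-continuity} this holds $\proba\ind{p}_\infty$-almost surely. Its arch diagram is then a non-crossing matching with no open half-arch, and Steps~1--3 apply verbatim except that the ``wrap-around'' edge of Step~1 is replaced by a bi-infinite line through the remaining vertices. Taking the dual, applying Tutte's bijection (which is itself prescribed by local rules and so passes to the infinite setting) and switching the starred edges yields a rooted planar map with a distinguished subgraph; when this output is locally finite around its root its metric balls are finite subgraph-rooted maps forming a Cauchy sequence, and I let $\Psi(w)$ be the corresponding element of $\M$. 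For words having an unmatched letter, or whose output is not locally finite, I set $\Psi(w)$ equal to a fixed element of $\M$. On each $\W_n$ this agrees with the original definition.

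The core step is a locality statement: for every $R\in\natural$ there is $L_R(w)\in\natural\cup\{\infty\}$ such that $B_R(\Psi(w))$, together with its distinguished subgraph, is reconstructed identically from any word $w'$ that agrees with $w$ on $[-L_R(w)-1,\,L_R(w)+1)$. Indeed, $B_R(\Psi(w))$ is carried by a bounded neighbourhood of the root in the quadrangulation $\map{Q}$ --- through Tutte's bijection adjacent vertices of $\map{M}$ lie at distance $2$ in $\map{Q}$ --- hence by a finite union of faces of the arch graph; this union is finite because each such face corresponds to a vertex of $\map{M}$ or of $\map{M}^\dual$ and therefore has boundary length bounded in terms of a vertex degree in $\map{M}$ or in $\map{M}^\dual$. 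One then takes $L_R(w)$ large enough that this family of faces, together with every arch incident to it, is contained in $[-L_R(w),L_R(w))$. By Proposition~\ref{prop:local} two words agreeing on $[-L_R(w),L_R(w))$ have the same matching on the vertices of this patch, hence an identical arch graph there; the extra margin $\pm1$ only serves to force the wrap-around edge of a finite word $w'\in\W_n$ to lie outside the window, so that it cannot touch the patch. Since membership in the spanning tree $\map{T}$ and being starred are both local data, the patch determines $B_R(\Psi(w'))=B_R(\Psi(w))$.

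It then remains to assemble almost-sure continuity. Call $w$ \emph{good} if every letter is matched, the output is locally finite near the root, and $L_R(w)<\infty$ for all $R$. At a good $w$ the proposition is immediate: given $R$, set $L=L_R(w)+1$; any $w'$ with $D\sub{loc}(w',w)<2^{-L}$ agrees with $w$ on $[-L,L)$, whence $B_R(\Psi(w'))=B_R(\Psi(w))$ and $d\sub{loc}(\Psi(w'),\Psi(w))\le2^{-R}$. Finally, $\proba\ind{p}_\infty$-almost every word is good: ``all letters matched'' is Proposition~\ref{prop:pre-continuity}(1), while local finiteness and the finiteness of every $L_R$ reduce to the statement that every vertex of $\map{M}$ and of $\map{M}^\dual$ has finite degree. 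This follows from Proposition~\ref{lemma:exponential tail}, which gives $\proba\ind{p}_\infty(N_0=\infty)=0$ for $N_0$ the root-vertex degree, applied --- using the translation invariance of the product measure $\proba\ind{p}_\infty$, under which the face of the arch graph anchored near any fixed index has the same law as the one near $0$, together with the $\xa\!\leftrightarrow\!\xb$, $\xA\!\leftrightarrow\!\xB$ symmetry --- to each of the countably many vertices of $\map{M}$ and of $\map{M}^\dual$.

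The step I expect to be delicate is the locality statement: pinning down precisely how far into the arch graph, and hence into the word, one must look to read off $B_R(\map{M},\map{G})$, and checking both that the relevant region is almost surely finite and that it is produced identically by the infinite-word construction above and by the original finite-word construction with its wrap-around edge. Everything else is soft --- the compactness of $(\W,D\sub{loc})$, the continuous mapping theorem (which, combined with Proposition~\ref{prop:word limit}, then yields the convergence $\proba\ind{q}_n\to\proba\ind{q}_\infty$ asserted in Theorem~\ref{thm:map limit}), and a Borel--Cantelli argument over the countably many vertices.
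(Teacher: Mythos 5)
Your proof is essentially correct, and the continuity part (extend Steps 1--3 to fully matched bi-infinite words, then show that a ball of radius $R$ in $\Psi(w)$ is read off from a finite window of $w$ via Proposition~\ref{prop:local} and the locality of duality, Tutte's bijection and star-switching) is the same argument as the paper's. Where you genuinely diverge is in how you guarantee that the construction is $\proba\ind{p}_\infty$-a.s.\ well defined and locally finite. The paper works on the event $\W_\infty$ given by \emph{both} parts of Proposition~\ref{prop:pre-continuity}: the second part (infinitely many $\xa$'s and $\xb$'s in every left stack $\rdb{w_{(-\infty,k)}}$) translates directly into ``infinitely many arches pass above and below every vertex of the arch graph'', hence no unbounded face, with no quantitative input needed --- this is precisely what that second statement is there for. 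You instead use only part (1) and import Proposition~\ref{lemma:exponential tail} together with translation invariance and the $\xa\!\leftrightarrow\!\xb$ symmetry to make every face anchored at some interval $[k-1,k]$ a.s.\ finite. This works (there is no circularity: $N_0$ is defined purely from the word and its proof precedes Section~\ref{sec:maps}), but it is heavier than necessary, and it leans on one pathwise identification you do not verify: that on the full-matching event, finiteness of $N_0$ is equivalent to finiteness of the face above $[-1,0]$, \emph{including} the degenerate cases $i_0=-\infty$ or $j_0=\infty$. A priori the face could be unbounded on one side (no arch ever closes over $[-1,0]$) while $X$ visits $0$ only finitely often; one must check, using non-crossing of the matching, that on the full-matching event every visit of $X$ to $0$ is followed either by a step to $-1$ or by a later return to $0$ (and symmetrically to the left), so that $j_0=\infty$ or $i_0=-\infty$ forces $N_0=\infty$. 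With that lemma supplied your route closes; the paper's route via Proposition~\ref{prop:pre-continuity}(2) avoids the issue altogether. Finally, note that for $p=1$ both Propositions \ref{prop:pre-continuity} and \ref{lemma:exponential tail} are only proved in the paper for $p<1$, with $p=1$ handled through the explicit $q=\infty$ limit, so your proof inherits exactly the same caveat as the paper's.
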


\begin{figure}[t!]
\captionsetup{width=0.8\textwidth}
\begin{center}
\includegraphics[scale=0.7]{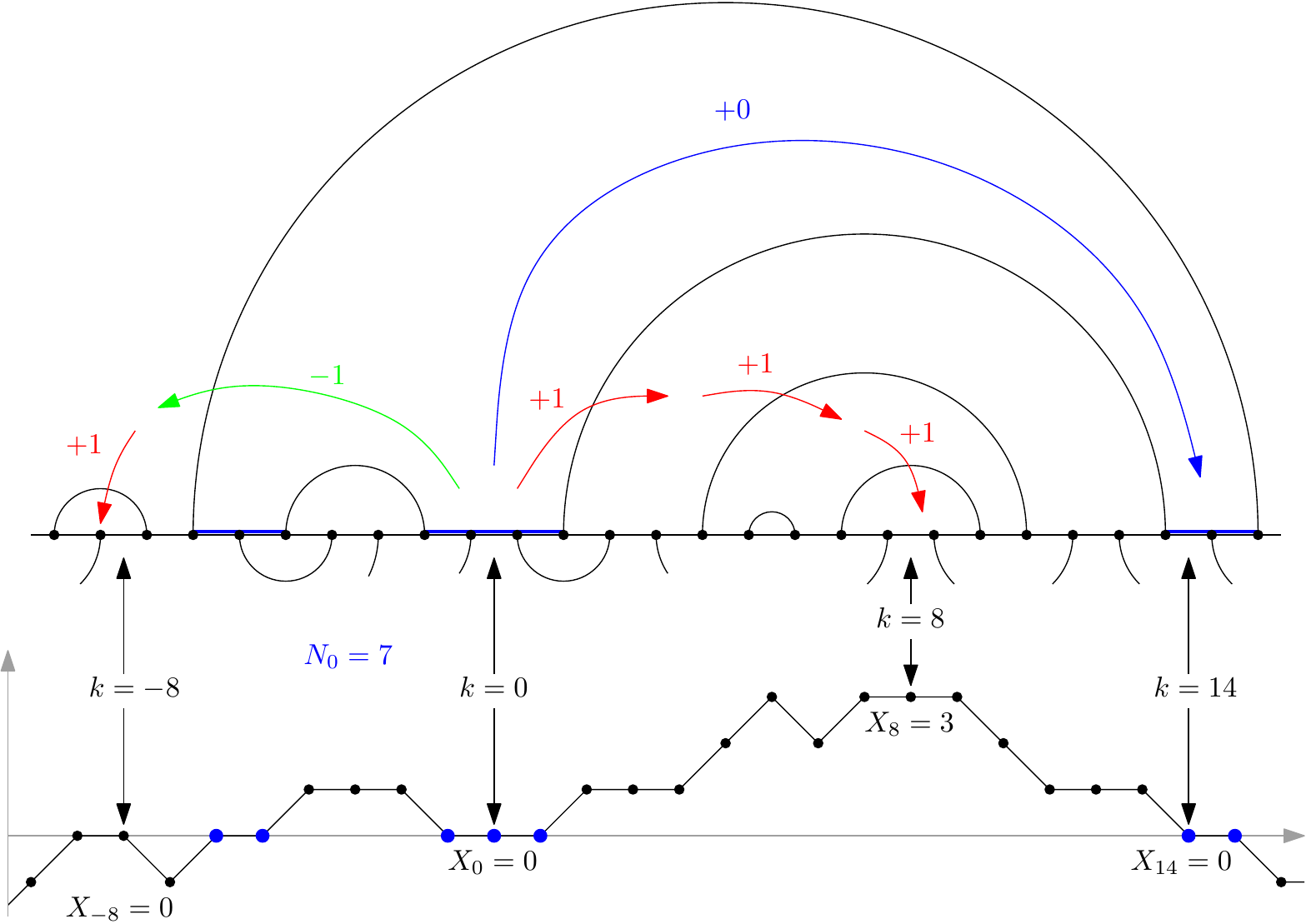}
\caption{An example of an infinite arch graph, and the associated process $(X_k)_{k\in\integer}$.
We shifted the arch graph horizontally by $1/2$ relative to the graph of $X$, since the time $k$ for the process $X$ naturally corresponds to the interval $[k-1,k]$ in the arch graph.}
\label{fig:interpretation of X}
\end{center}
\end{figure}

\proof Observe that if we do not care about the location of the second root $\map{s}$, then the word $w$ used in the construction of $\Psi$ does not have to be finite.
Set
\begin{equation}\label{eq:W infty}
	\W_\infty = \set{w\in\Theta^\integer}{
	\begin{array}{r}
	\rd{w}=\emptystr\text{ and for all } k\in\integer,\rdb{w_{(-\infty,k)}}\text{ contains}
\\	\text{infinitely many }\xa\text{ and infinitely many }\xb 
	\end{array}
	}
\end{equation}
We claim that indeed, for each $w\in \W_\infty$, Step 1, 2 and 3 of the construction define a (locally finite) infinite subgraph-rooted map: as in the case of finite words, the condition $\rd{w}=\emptystr$ ensures that the arch graph $\map{A}$ of $w$ is a well-defined infinite planar map (that is, all the arches are closed).
To see that its dual map $\map{\Delta}$ is a locally finite, infinite triangulation, we only need to check that each face of $\map{A}$ has finite degree.
Observe that a letter $\xa$ in $w$ appears in $\rdb{w_{(-\infty,k)}}$ if and only if it is on the left of $w_k$, and that its partner is on the right of $w_k$.
This corresponds to an arch passing above the vertex $k$.
Therefore, the remaining condition in the definition of $\W_\infty$ says that there are infinitely many arches which pass above and below each vertex of $\map{A}$.
This guarantees that $\map{A}$ has no unbounded face.
The rest of the construction consists of local operations only.
So the resulting subgraph-rooted map $(\map{M,G}) = \Psi(w)$ is a locally finite subgraph-rooted map.

Also, by Proposition \ref{prop:pre-continuity}, we have $ \mathbb{P}_{\infty}^{(p)}( \mathcal{W}_{\infty})=1$.  It remains to see that (the extension) of $\Psi$ is continuous on $  \mathcal{W}'=\bigcup_{n} \mathcal{W}_{n} \cup \mathcal{W}_{\infty}$. Let $w^{(n)}, w \in \mathcal{W}'$ so that $w^{(n)} \to w$ for $D\sub{loc}$. If $w$ is finite, there is nothing to prove. Otherwise, let $(\map{M,G})=\Psi(w)$ and consider a ball $B$ of finite radius $r$ around the root in the map $\m $. By locality of the mapping $\map{\Delta}\leadsto\m $, the ball $B$ can be determined by a ball $B'$ of finite radius $r'$ (which may depend on $ \mathbf{M}$) in $\map{\Delta}$.
But each triangle in $\map{\Delta}$ corresponds to a letter in the word, so
there exists $r''$ (which may depend on $\mathbf{M}$) such that if $w^{(n)}_{[-r'',r'']}= w_{[-r'',r'']}$ then the balls of radius $r'$ in $\map{\Delta}$ coincide.
This proves that $\Psi$ is continuous on $\W_\infty$.
\endproof

\subsection{Proof of Theorem \ref{thm:map limit}}\label{sub:proof}
Combining Proposition \ref{prop:word limit}, Theorem \ref{thm:shebij} and Proposition \ref{prop:extcontinu} yields the convergence statement of the theorem. The self-duality of the infinite cFK random map follows from the finite self-duality. It remains to show that the infinite cFK random maps are almost surely one-ended and recurrent for all $q$, and that their laws are mutually singular for different values of $q$.

\paragraph{One-endedness.} Recall that a graph $\map{G=(V,E)}$ is said to be one-ended if for any finite subset of vertices $\map{U}$, $\map{V\backslash U}$ has exactly one infinite connected component.
We will prove that for a word $w\in\W_\infty$, $\Psi(w)$ is one-ended.
Let $\map{A}$ (resp. $\map{\Delta}$) be the arch graph (resp. triangulation) associated to $w$, and let $(\map{M,G})=\Psi(w)$.
By the second condition in the definition of $\W_\infty$ (see \eqref{eq:W infty}), there exist arches that connect vertices on the left of $w_{-R}$ to vertices on the right of $w_R$ for any finite number $R$.
Therefore the arch graph $\map{A}$ is one-ended. It is then an easy exercise to deduce from this that the triangulation $\map{\Delta}$ and then the map $ \mathbf{M}$ are also one-ended.

\paragraph{Recurrence.} To prove the recurrence of $\m $ we use the general criterion established by  Gurel-Gurevich and Nachmias \cite{GGN13a}. Notice first that under $ \mathbb{Q}_{n}^{(q)}$, the random maps are  \emph{uniformly rooted}, that is, conditionally on the map, the root vertex $\rho$ is chosen with probability proportional to its degree. By \cite{GGN13a} it thus suffices to check that the distribution of $\deg(\rho)$ has an exponential tail. For this we claim that the variable $N_{0}$ studied in Lemma \ref{lemma:exponential tail} exactly corresponds to the degree of the root in an infinite cFK random map.
From the construction of the hamburger-cheeseburger bijection, we see that the vertices of the map $\m $ corresponds to the faces of the arch graph \emph{in the upper half plane}.
In particular, the root vertex $\rho$ corresponds to the face \emph{above} the interval $[-1,0]$, and $\deg(\rho)$ is the number of unit intervals on the real axis which are also on the boundary of this face.
On the other hand, $X_k$ is the \emph{net number} of arches that one \emph{enters} to get from the face above $[-1,0]$ to the face above $[k-1,k]$, see Fig.~\ref{fig:interpretation of X}.
So $N_{0}$ exactly counts the above number of intervals.

\paragraph{Mutual singularity.}
The basic idea is to construct a measurable function $f$ on infinite rooted planar maps such that $f(\m )$ is almost surely constant for each $q\in[0,\infty]$, and that $f(\m ) = f(q)$ is an injective function of $q$.
One such function can be defined as follows.
Consider the simple random walk on the map $\m $.
We regard it as a sequence of oriented edges $(\ve[E]_k,k\ge 0)$ such that $\ve[E]_0$ is the the root edge (i.e.\ the edge on the left of the root corner), and such that $\ve[E]_{k+1}$ starts at the vertex where $\ve[E]_k$ ends.
An oriented edge $\ve$ is \emph{pending} if the starting point of $\ve$ has degree 1.
Let
$$ f(\m ) = \lim_{n\to\infty} \frac{1}{n}\sum_{k=0}^{n-1} \idd{\ve[E]_k \text{ is pending}} $$
From the ergodicity result in the appendix and Birkhoff's ergodic theorem, it follows that $f(\m )=\proba[Q](\ve[E]_0 \text{ is pending})$ almost surely.
Recall that $\ve[E]_0$ is the root edge of $\m $.
A moment of look at Fig. \ref{fig:step 1-2} shows that $\ve[E]_0$ is a pending edge in $(\m,\map{G})=\Phi(w)$ if and only if $w_{-1}=\xa$ and $w_0\in\{\xA,\xF\}$.
Therefore,
\begin{align*}
	\proba[Q](\ve[E]_0 \text{ is pending})
=&	\proba(w_{-1}=\xa\text{ and }w_0\in\{\xA,\xF\})
\\=&\theta(\xa)\bpar{ \theta(\xA)+\theta(\xF) } = \frac{1+p}{16}
=	\frac{1}{16} \Bpar{ 1+\frac{\sqrt{q}}{2+\sqrt{q}} }
\end{align*}
We see that $f(q)=f(\m )$ is injective in $q$.

\appendix
\section{Ergodicity of cFK random maps}\label{appendix}
Here we use the framework set up by Benjamini and Curien in \cite{BC12} and we follow the exposition in \cite[Sec 3.1]{PSHIT}.
Consider the space $\overrightarrow{\M}$ of all locally finite rooted maps endowed with a path $\ve = \ven$ starting from the root edge.
Let $\vec{d}\sub{loc}$ be the natural local distance on $\overrightarrow{\M}$
$$ \vec{d}\sub{loc}((\m,\ve),(\m',\ve\,')) = 
	\inf\set{2^{-R}}{B_R(\m)=B_R(\m')
					\text{ and } \ve_k=\ve\,'_k\text{ for }0\le k\le R}
$$
and consider the associated Borel $\sigma$-algebra.
If $\m $ is a rooted map, let $\QM$ be the law of the simple random walk starting from the root edge of $\m $.
We denote by $\QQ$ the probability measure
$$
	\QQ(A) = \int \QM(A_\m)\, \proba[Q](\dd \m)
$$
on $\overrightarrow{\M}$, where $A_\m  = \set{ \ven }{ (\m,\ven)\in A}$.
To simplify notation, we will write $\QM(A)$ instead of $\QM(A_\m )$ in the sequel.

If $\ve$ is an oriented edge of $\m $, we write $\m|_{\ve}$ for the map obtained by re-rooting $\m $ at $\ve$.
Observe that if $(\m,(\ve[E]_n)_{n\ge 0})$ is a random variable of law $\QQ$, then 
$$	\m  = \m|_{\ve[E]_1}	$$
in distribution. (Remark that the left-hand-side is the same as $\m|_{\ve[E]_0}$.)
This is a consequence of the fact that $\m $ is the local limit of a sequence of uniformly rooted finite maps.
It follows that the shift operator
$$	\tau: (\m,\ven) \mapsto (\m|_{\ve_1},\ven[+1])	$$
preserves the measure $\QQ$.

\begin{prop}
The shift operator $\tau$ is ergodic for $\QQ$.
\end{prop}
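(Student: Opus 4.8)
The plan is to deduce the ergodicity of $\tau$ from the ergodicity of the underlying i.i.d.\ word $W\ind{p}$ under the usual shift on $\Theta^{\integer}$, transported through the (almost surely continuous) mapping $\Psi$. The key observation is that re-rooting the infinite cFK map $\m $ along the simple random walk admits a description purely in terms of the word: the environment $(\m,\ve)$ of law $\QQ$ can be realized by enriching the word $W\ind{p}$ with the extra i.i.d.\ randomness (uniform in $\{\ldots\}$ for each step) needed to choose the successive walk steps, and then the dynamical system $(\overrightarrow{\M},\QQ,\tau)$ becomes a factor of a system built on a product space $\Theta^{\integer}\times U^{\integer}$ (with $U=[0,1]$, say) equipped with a product measure. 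On that product space one has an ergodic — indeed Bernoulli — shift, and I would show that $\tau$ is a factor of (a skew product over) it.

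First I would make precise the skew-product picture. Given $(\m,\ve[E]_0)=\Psi(W\ind{p})$ rooted at the canonical corner, each SRW step at a vertex of degree $d$ is a uniform choice among $d$ oriented edges; encode this choice via one auxiliary uniform variable $U_n$, and let the enriched sequence be $\Xi_n=(\text{shift of }W\ind{p}\text{ placing }\ve[E]_n\text{ at the origin}, U_{n+1},U_{n+2},\dots)$. The content of the paragraph preceding the proposition — that $\m|_{\ve[E]_1}\overset{d}{=}\m $, a consequence of the uniform rooting of the finite maps — is exactly the statement that this works at the level of one step, i.e.\ that the root edge $\ve[E]_0$ of $W\ind{p}$ "looks like" a size-biased pick, so that re-rooting at $\ve[E]_1$ returns the same law together with fresh independent $U$'s. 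Iterating, $\tau$ is realized as a measurable factor of the full left shift on the product space $(\Theta\times U)^{\integer}$ with the i.i.d.\ product measure $\theta\ind{p}\otimes\mathrm{Unif}[0,1]$ — more precisely, as a factor of the corresponding \emph{natural extension}, since the walk explores both sides of the origin and the shift on a two-sided i.i.d.\ sequence is its own natural extension.

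The second step is the soft conclusion: the two-sided i.i.d.\ shift is mixing, hence ergodic (it is even Bernoulli), and a factor of an ergodic system is ergodic; therefore $\tau$ is ergodic for $\QQ$. Here I would use that the factor map is $\QQ$-measure-preserving by construction and defined $\QQ$-almost everywhere, which relies on Proposition~\ref{prop:pre-continuity} (so that $W\ind{p}\in\W_\infty$ a.s.\ and all the objects — matching, walk $Z$, degrees — are a.s.\ well defined) and on Proposition~\ref{prop:extcontinu} (so that $\Psi$ and the derived combinatorial data depend measurably on the word).

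The main obstacle is the first step: justifying rigorously that re-rooting $\m $ along one SRW step returns a sequence distributed as the enriched i.i.d.\ sequence, i.e.\ that the extra uniform randomness "regenerates" after each step. Morally this is the combination of (i) the one-step reversibility/stationarity $\m|_{\ve[E]_1}\overset{d}{=}\m $ recorded just before the proposition, which itself comes from the uniform-rooting of the finite models and the local convergence, and (ii) the fact that the SRW increments are conditionally i.i.d.\ given the map. Turning "morally" into "rigorously" means checking that the word-level shift implementing the re-rooting is measurably consistent with the map-level re-rooting across all corners — that the canonical corner of $\Psi$ after translating the word sits at the right place — and that no loss of information occurs (the walk visits every vertex needed, using one-endedness and recurrence, so that the factor map is genuinely onto at the level of the relevant $\sigma$-algebra). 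Once that identification is in place, ergodicity is immediate from Bernoullicity of the driving shift.
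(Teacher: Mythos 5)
There is a genuine gap, and it sits exactly at the point you yourself flag as the ``main obstacle'': the identification of $\tau$ with (a factor of) the i.i.d.\ shift is not correct as stated, and it is not a technicality that can be patched afterwards. The natural candidate factor map $(W,(U_n)_{n\ge1})\mapsto(\Psi(W),\text{walk driven by }(U_n))$ does \emph{not} intertwine the left shift on $(\Theta\times[0,1])^{\integer}$ with $\tau$: translating the word by one letter re-roots $\Psi(W)$ at the next \emph{corner} in the word order (a deterministic re-rooting), whereas $\tau$ re-roots at $\ve[E]_1$, i.e.\ translates the word by a random, unbounded amount depending on both $W$ and $U_1$. What your construction actually produces is the transformation $(w,u_1,u_2,\dots)\mapsto(\sigma^{T(w,u_1)}w,\,u_2,u_3,\dots)$, a skew product (of $[T,T^{-1}]$ type) over the Bernoulli system, and for such systems ergodicity is \emph{not} a consequence of the Bernoullicity of the driving randomness: a skew product over a Bernoulli shift can perfectly well fail to be ergodic (take a trivial cocycle), so the soft step ``factor of an ergodic system, hence ergodic'' is unavailable. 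Likewise, the one-step identity $\m|_{\ve[E]_1}\overset{d}{=}\m$ together with ``fresh independent $U$'s'' gives exactly stationarity of $\QQ$ under $\tau$ --- which the paper records before the proposition --- and nothing more; reading it as Bernoullicity is where the argument breaks. Proving ergodicity of this skew product \emph{is} the proposition, so the proposal is circular at its core step.

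For comparison, the paper uses the i.i.d.\ structure of the word only where it legitimately applies. First, conditionally on $\m$, recurrence lets one cut the walk into i.i.d.\ excursions from the root, so Kolmogorov's zero-one law gives $\QM(A)\in\{0,1\}$ almost surely for a $\tau$-invariant $A$. Second, the Markov property of the walk plus $\tau$-invariance upgrade this to: the value of $\QM(A)$ is almost surely unchanged under re-rooting at any oriented edge. Only then does the word enter: re-rooting at an \emph{arbitrary} corner corresponds to a plain translation of the i.i.d.\ word, so re-rooting--invariant events have trivial probability. If you want to salvage your strategy you would have to prove ergodicity of the skew product directly (say via a random-ergodic-theorem argument using recurrence of the walk), which in effect reproduces steps (a) and (b) of the paper's proof rather than bypassing them.
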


\begin{proof}
Let $A$ be a $\tau$-invariant event for $\QQ$, that is, $\QQ(A\Delta \tau^{-1}(A))=0$, where $\Delta$ denotes the symmetric difference: $A\Delta B = (A\backslash B)\cup (B\backslash A)$.
We do the proof in three steps:

\begin{enumerate}[label=(\alph*)]
\item	$\proba[Q]$-almost surely, $\QM(A)\in \{0,1\}$.
\end{enumerate}
Recall that the simple random walk on $\m $ is $\proba[Q]$-almost surely recurrent.
Therefore, it can be decomposed into an i.i.d.\ sequence of excursions from the root edge.
An event invariant by $\tau$ is also invariant by the shift operator associated to this i.i.d.\ sequence.
Thus (a) follows from Kolmogorov's zero-one law.

\begin{enumerate}[resume,label=(\alph*)]
\item	$\proba[Q]$-almost surely, the value of $\QM(A)$ is invariant under any re-rooting of the map $\m$.
\end{enumerate}
First let us fix a rooted map $\map{M}$.
Let $x$ be the vertex to which the root edge points.
Denote by $d$ the degree of $x$, and $\vec{\rho}_1,\ldots,\vec{\rho}_d$ the $d$ oriented edges that point away from $x$.
We deduce from the Markov property of the simple random walk that
\newcommand*{\bset}[2]{\big\{\,#1\ \big|\ #2\,\big\}}
\begin{align*}
\QM(\tau^{-1}A) &= \QM\bset{ \ven }{ (\m|_{\ve_1},\ven[+1])\in A }
\\				&= \sum_{k=1}^d \QM\bset{ \ven }{ (\m|_{\ve_1},\ven[+1])\in A 
									\text{ and } \ve_1 = \vec{\rho}_k }
\\				&= \sum_{k=1}^d\, \frac{1}{d} Q_{\m|_{\vec{\rho}_k}}\!\!
											\bset{ \ven }{ (\m|_{\vec{\rho}_k},\ven)\in A }
\ =\ \frac{1}{d} \sum_{k=1}^d Q_{\m|_{\vec{\rho}_k}}\!(A)
\end{align*}
From
$$
	\int \abs{\QM(A) - \QM(\tau^{-1}A)} \, \proba[Q](\dd \m)
\le \QQ(A\Delta \tau^{-1}A) =0
$$
we deduce that $\proba[Q]$-almost surely,
$$
	\QM(A) = \frac{1}{d} \sum_{k=1}^d Q_{\m|_{\vec{\rho}_k}}\!(A)
$$
But according to (a), $\QM(A)$ and $Q_{\m|_{\vec{\rho}_k}}\!(A)$ ($k=1,\ldots,d$) are either 0 or 1.
Thus $\proba[Q]$-almost surely, $\QM(A) = 1$ if and only if for all $k=1,\ldots,d$, $Q_{\m|_{\vec{\rho}_k}}\!(A)=1$.
In other words, the value of $\QM(A)$ is unchanged when re-rooting at a neighbor of the root edge.
But since the maps are connected, we obtain (b) by iterating the above argument.

\begin{enumerate}[resume,label=(\alph*)]
\item 	If an event $\hat{A}$ on the space of (locally finite) rooted maps is $\proba[Q]$-almost surely invariant under re-rooting, then $\proba[Q](\hat{A})\in\{0,1\}$.
\end{enumerate}
Consider the measure-preserving mapping $\hat{\Phi}:w\mapsto \m$, where $(\m,\map{G})=\Phi(w)$ for some subgraph $\map{G}$.
Via this mapping, a translation of indices in a word $w$ give rise to a re-rooting of the corresponding map $\m$.
Therefore if an event $\hat{A}$ is $\proba[Q]$-almost surely invariant under re-rooting, then $\hat{\Phi}^{-1}\!\hat{A}$ is $\proba$-almost surely invariant under translation of the indices.
But, under $\proba$ the letters of $w$ are i.i.d.\ random variables, so we have $\proba[Q](\hat{A}) = \proba(\hat{\Phi}^{-1}\hat{A})\in\{0,1\}$.

\medskip\noindent
Finally, considering $\hat{A}=\{\m : \QM(A)=1\}$ shows that $\QQ(A)\in\{0,1\}$, as desired.
\end{proof}

\paragraph{Remark.} Only the step (c) of the proof uses specific features of the cFK random maps, namely, their representation by an i.i.d.\ sequence of letters.
For any infinite random map whose law is stationary under $\tau$, the proof of (a) and (b) goes through provided that the random map is almost surely recurrent. (Note that the proof of (b) depends on (a).)
The case of almost surely transient random maps was treated in \cite[Prop.\ 10]{PSHIT}.
There, (a) was proved under the following reversibility condition: if $\overleftarrow{e}$ is the root edge of $\map{M}$, oriented in the opposite direction, then
$$
	\m = \m|_{\overleftarrow{e}}
$$
in distribution.
And (b) was replaced by the following variant, which follows directly from the transience of the map.
\begin{enumerate}[label=(\alph*')]
\setcounter{enumi}{1}
\item	Almost surely, $\QM(A)$ is unchanged by any finite modification of the map $\m$.
\end{enumerate}

\bibliographystyle{abbrv}{%
\bibliography{cite_draft}}

\bigskip
\noindent
Linxiao Chen, Institut de Physique Théorique, Université Paris Saclay, CEA, CNRS, F-91191 Gif-sur-Yvette, France.\\
Laboratoire de Mathématiques d'Orsay, Univ. Paris-Sud, CNRS, Université Paris-Saclay, 91405 Orsay, France.

\end{document}